\newtheorem{mydef}{Definition}
\newtheorem{mylem}{Lemma}
\newtheorem{mythm}{Theorem}
\newtheorem{mypro}{Problem}
\newtheorem{myprop}{Proposition}
\newtheorem{myrem}{Remark}
\newcommand{\rfig}[1]{Fig.\,\ref{#1}} 
\newcommand{\req}[1]{\eqref{#1}}
\newcommand{\rlem}[1]{Lemma\,\ref{#1}}
\newcommand{\rsec}[1]{Section\,\ref{#1}}
\newcommand{\rpro}[1]{Problem\,\ref{#1}}
\newcommand{\rprop}[1]{Proposition\,\ref{#1}}
\newcommand{\rdef}[1]{Definition\,\ref{#1}}
\newcommand{\rthm}[1]{Theorem\,\ref{#1}}
\newcommand{\qedwhite}{\hfill \ensuremath{\Box}}
\begin{document}
\title{\LARGE \textbf{Energy-aware networked control systems \\ under temporal logic specifications}}
\author{Kazumune Hashimoto, Shuichi Adachi, and Dimos V. Dimarogonas
\thanks{Kazumune Hashimoto is with Department of Applied Physics and Physico-Informatics, Keio University, Yokohama, Japan. His work is supported by Grant-in-Aid for JSPS Research Fellow (Grant Number: 17J05743).}
\thanks{Shuichi Adachi are with Department of Applied Physics and Physico-Informatics, Keio University, Yokohama, Japan.}
\thanks{Dimos V. Dimarogonas is with the ACCESS Linnaeus Center,
School of Electrical Engineering, KTH Royal Institute of Technology,  Stockholm, Sweden. His work was supported by the Swedish Research Council (VR), Knut och Alice Wallenberg foundation (KAW), and the H2020 ERC Starting Grant BUCOPHSYS. 
}}
\maketitle

\begin{abstract}
In recent years, event and self-triggered control have been proposed as energy-aware control strategies to expand the life-time of battery powered devices in Networked Control Systems (NCSs). 
In contrast to the previous works in which their control objective is to achieve stability, this paper presents a novel energy-aware control scheme for achieving \textit{high level specifications}, or more specifically, \textit{temporal logic specifications}. Inspired by the standard hierarchical strategy that has been proposed in the field of formal control synthesis paradigm, we propose a new abstraction procedure for jointly synthesizing control and communication strategies, such that the communication reduction in NCSs and the satisfaction of the temporal logic specifications are guaranteed. The benefits of the proposal are illustrated through a numerical example. 

\end{abstract}

\section{Introduction}
The increased popularity of introducing Networked Control Systems (NCSs) where sensors, actuators, and controllers are spatially distributed over communication network, have recently suggested the use of \textit{event-triggered} and \textit{self-triggered control} \cite{heemels2012a,tabuada2010a}. 
In the control strategies, sensor measurements are transmitted to the controller {only when} they are needed based on the violation of prescribed control performances. Applying these strategies is useful, since they have the potential to save energy expenditures by alleviating the communication load for NCSs. Some experimental validations have also been provided, e.g., in \cite{araujo2013}. 

In this paper, we present a new aperiodic formulation for NCSs, which allows them to achieve not only the communication reduction, but also the satisfaction of \textit{high level (temporal) specifications}. To illustrate the proposal more in detail, note first that the control objective in the previous aperiodic formulations is only \textit{stabilization} or \textit{tracking to a given reference}. However, considering the fact that the introduction of NCSs allows us to construct more complex, advanced control architectures as exemplified by automonous robots, traffic systems and so on, achieving the above control objectives may not be sufficient. 
For instance, consider \textit{cloud robotics}\cite{ben2015a}, where autonomous robots such as UAVs and warehouse robots interact with the remote operators for supporting various motion tasks and missions. Although their motion tasks given by operators may be to achieve only stability (e.g., ``go to the region $A$''), it is sometimes required to achieve more complex tasks involving \textit{temporal} ones, such as sequential tasks (e.g., ``visit the region $A$, \textit{and then} $B$, \textit{and then} $C$''), {recurrence} tasks with obstacle avoidance (e.g.,``survey the region $A, B, C, D$ (in any order) \textit{infinitely often}, while \textit{always} avoiding the region $E$''), and other complex tasks with temporal properties (e.g., ``visit $A, B, C, D$, and make sure to avoid $D$ \textit{unless} $A$ is visited'').  
Therefore, it may be of great importance to design the event (self)-triggred strategies for accomodating such high level specifications. 

The goal of this paper is to jointly synthesize a \textit{communication strategy} such that the communication reduction is achieved for NCSs, and a \textit{control strategy} such that the above high level missions can be achieved. In particular, we consider that the specifications are expressed by Linear Temporal Logic (LTL) formula\cite{baier}, and we employ the {hierarchical} approach that has been proposed in field of formal control synthesis paradigm, see, e.g.,  \cite{belta2008a,gerogios2009a,temporalcontrol2009a}. 
First, we construct a {finite transition system} that represents an \textit{abstracted} behavior of the original control system. The transition system is constructed through \textit{reachability analysis}, which evaluates if the state can be steered between each pair of the regions of interest. One of the main contributions of this paper is to provide a new reachability analysis such that the communication strategies can be designed after this procedure. The proposed approach mainly consists of the two steps; \textit{trajectory generation} and \textit{tube generation}. In the trajectory generation, a nominal state trajectory is generated by implementing existing strategies, such as sampling-based algorithms \cite{lavalle1999}. The trajectory is utilized as a {reference} that the actual trajectory should follow to achieve the entrance to the target region. 
In the tube generation, we seek to generate a sequence of tubes (polytopes) among the regions of interest. Since the generated tubes represent safety margins to guarantee reachability, we will make use of them as criteria to analyze reachability. Based on the reachability analysis, we next propose to derive a self-triggered strategy that the controller iteratively assigns communication times by evaluating the self-triggered condition. The self-triggered condition is derived by making use of the tubes obtained by the reachability analysis, and the communication is triggered only when the
state trajectory does not guarantee the satisfaction of the
formula.

The rest of the paper is organized as follows. We describe some preliminaries and the problem formulation in Section~II and III, respectively. 
In Section~IV, reachability analysis and an algorithm to obtain a finite transition system are given. In Section~V, we propose the implementation algorithm involving both high and low level strategies. In Section~VI, a simulation example is given to validate the effectiveness of the proposed approach. We finally conclude in Section~VII. 

\smallskip
\noindent
\textbf{Notations.} Let $\mathbb{R}_+$, $\mathbb{N}$, $\mathbb{N}_+$, $\mathbb{N}_{a: b}$ be the {positive real, non-negative}, {positive integers}, and the set of integers in the interval $[a, b]$, respectively. 
For vectors $v_1, \ldots, v_N$, denote by ${\rm co} \{ v_1, \ldots, v_N \}$ their \textit{convex hull}. 
For two given sets ${A}\subset {\mathbb{R}}^{n}$, $B\subset {\mathbb{R}}^{n}$, denote by $A \oplus B$ the \textit{Minkowski sum} $A \oplus B = \{ z \in {\mathbb{R}}^{n} \ |\ \exists x\in A, y\in B : z = x + y \}$ and by $A \ominus B$ the \textit{Pontryagin difference} $A \ominus B = \{ x\in {\mathbb{R}}^{n}\ |\ x+y \in A, \ \forall y\in B \}$.  
For simplicity we denote the collection of $N$ sets ${\cal X}_1,\ldots,{\cal X}_N \subset {\mathbb{R}}^{n}$ as ${\cal X}_{1:N} = \{ {\cal X}_1,\ldots,{\cal X}_N \}$. 


\section{Preliminaries} \label{preliminaries}
In this section, we provide some preliminaries on \textit{finite transition systems} and \textit{Linear Temporal Logic (LTL)} formulas. 
A {finite transition system} is a tuple ${\cal T} = ({S}, s_{init}, \delta, \Pi, g)$, where $S$ is a set of states, $s_{init} \in S$ is an initial state, $\delta \subseteq S\times S$ is a transition relation, $\Pi$ is a set of atomic propositions, and $g: S \rightarrow 2^\Pi$ is a labeling function. A path of ${\cal T}$ is an infinite sequence of states $s_{seq} = s_0 s_1 s_2 \cdots$, such that $s_0 = s_{init}$, $(s_i, s_{i+1}) \in \delta$, $\forall i \in \mathbb{N}$. A trace of a path $s_{seq} = s_0 s_1 s_2 \cdots$ is given by ${\rm trace} (s_{seq}) = g(s_0) g(s_1) g(s_2)\cdots$. Linear Temporal Logic (LTL) is defined over a set of atomic propositions $\Pi$ and is useful to express various task specifications involving temporal properties. The LTL formulas is given according to the following grammer: 
$\phi::= {\rm true}\ |\ \pi\ |\ \phi_1 \wedge \phi_2 \ |\ \neg \phi \ |\ \phi_1 \mathsf{U} \phi_2$, 
where $\pi \in \Pi$ is the atomic proposition, $\wedge$ (and), $\neg$ (negation) are the boolean operators, and $\mathsf{U}$ (until) is the temporal operator. We can also derive other useful temporal operators, such as $\Box$ (always), $\Diamond$ (eventually), and $\implies$ (implication). For the LTL semantics, see Chapter~5 in \cite{baier} for details. For a given path $s_{seq}$ of ${\cal T}$, we denote ${\rm trace}(s_{seq}) \models \phi$ if and only if ${\rm trace}(s_{seq})$ satisfies $\phi$. The path $s_{seq}$ is called accepting if and only if ${\rm trace}(s_{seq}) \models \phi$. 

\section{Problem formulation}
\subsection{Plant dynamics}
We consider a Networked Control System illustrated in \rfig{NCS}, where the plant and the controller are connected over communication channels. The controller system is responsible for both implementing a \textit{high level planner} that generates a symbolic plan for a given LTL specification $\phi$, and a \textit{low level planner} that generates feedback controllers to operate the plant. How these planners are designed will be formally
given later in this paper. 
The dynamics of the plant are given by the following Linear-Time-Invariant (LTI) systems: 
\begin{align}
x_{k+1} = A x_k + B u_k + w_k, \label{sys}
\end{align}
for $k\in \mathbb{N}$, where $x_k \in \mathbb{R}^n$ is the state, $u_k \in \mathbb{R}^m$ is the control variable, and $w_k \in \mathbb{R}^n$ is the additive disturbance. We assume that the control and the disturbance variables are constrained in the sets ${\cal U}$ and ${\cal W}$, i.e., 
\begin{equation}\label{uwsets}
u_k \in {\cal U}, \ \ w_k \in {\cal W}
\end{equation}
for all $k \in \mathbb{N}$. Here, ${\cal U}$ and ${\cal W}$ are both polytopic sets containing the origin in their interiors. 
Regarding the state, we assume $x_k \in {\cal X}$, $\forall k \in \mathbb{N}$, where ${\cal X}$ is a {polygonal} set that can be either a convex or non-convex region. The set ${\cal X}$ represents the \textit{free space}, in which the state is allowed to move. 
Inside ${\cal X}$, we assume that there exist in total $N_I$ number of polytopic regions ${\cal R}_1, {\cal R}_2, \ldots, {\cal R}_{N_I} \subset {\cal X}$, which represent the \textit{regions of interest} in ${\cal X}$. These regions are assumed to be disjoint, i.e., ${\cal R}_i \cap {\cal R}_j = \emptyset$, $\forall i, j \in \mathbb{N}_{1:N_I}$ with $i\neq j$. Moreover, let $x_{cent,i} \in {\cal R}_i$, $i \in \mathbb{N}_{1:N_I}$ denote the Chebyshev center \cite{borrelli} of the polytope ${\cal R}_i$. The Chebyshev center is the center of the maximum ball that is included in ${\cal R}_i$ and is obtained by solving a linear program (for details, see Section 5.4.5 in \cite{borrelli}). For simplicity, denote by ${\cal R} = \{{\cal R}_1, {\cal R}_2, \ldots, {\cal R}_{N_I}\}$ the collection of all regions of interest. In addition, the initial state is assumed to be inside one of the regions of interest, i.e., $x_{0} \in {\cal R}_{init}$ with ${\cal R}_{init} \in {\cal R}$. 

Let $\pi_i$, $i \in \mathbb{N}_{1:N_I}$ be the atomic proposition assigned to the region ${\cal R}_i$. Namely, $\pi_i$ holds true if and only if $x \in {\cal R}_i$. Also, denote by $\pi_0$ an atomic proposition associated to the regions of \textit{non-interest}, i.e., $\pi_0$ holds true if and only if $x \in {\cal X} \backslash (\cup^{N_I} _{i=1} {\cal R}_i) $. 
The atomic proposition $\pi_0$ represents a \textit{dummy symbol}, which will not be used to describe the task specification. Let $\Pi = \{\pi_1, \pi_2, \ldots, \pi_{N_I}\}$ and ${h}_X : \mathbb{R}^n \rightarrow 2^\Pi $ be the mapping that maps the state information to the corresponding atomic proposition, i.e., 
\begin{numcases}
{{h}_X (x) = }
	\pi_i, \ \ {\rm if}\ \ x \in {\cal R}_i, \ \ i\in \mathbb{N} _{1:N_I}, \\
	\pi_0, \ \ {\rm if}\ \ x \in {\cal X}_{\backslash {\cal R}}, 
\end{numcases}
where ${\cal X}_{\backslash {\cal R}} = {\cal X}\backslash(\cup^{N_I} _{i=1} {\cal R}_i)$. 

\begin{figure}[t]
  \begin{center}
   \includegraphics[width=7.5cm]{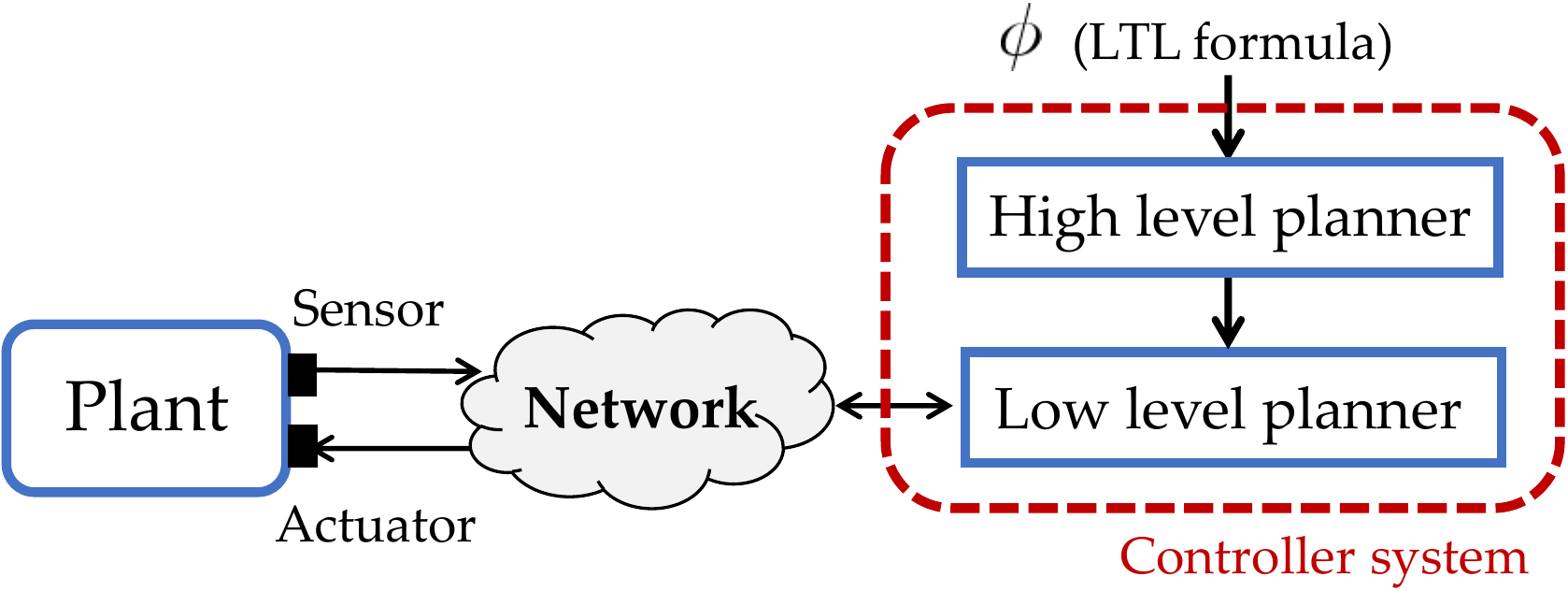}
   \caption{Networked Control System.} 
   \label{NCS}
  \end{center}
 \end{figure}

\subsection{Satisfaction relation over the state trajectory}
Denote by ${\bf {x}} = x_0 x_1 x_2 \cdots $ a state trajectory in accordance to \req{sys}, with $x_k \in {\cal X}$,  $u_k \in {\cal U}$, $w_k \in {\cal W}$, $\forall k \in \mathbb{N}$. We next define the satisfaction of the formula $\phi$ by the state trajectory ${\bf {x}}$. To this end, we define the \textit{trajectory of interest} as follows: 
\begin{mydef}\label{trajectry_of_interest}
\normalfont
Given a state trajectory ${\bf {x}} = x_0 x_1 x_2 \cdots $, the \textit{trajectory of interest} ${\bf {x}}_I$ corresponding to ${\bf {x}}$ is defined as ${\bf {x}}_I = x_{k_0} x_{k_1} x_{k_2} \cdots $ with $k_j < k_{j+1}$, $\forall j \in \mathbb{N}$, such that: $h_X (x_{k_j}) \in \Pi$, $\forall j \in \mathbb{N}$, and $h_X (x_k) = \pi_0$, $\forall k \in (k_j, k_{j+1})$, $\forall j \in \mathbb{N}$. \qedwhite
\end{mydef}

Stated in words, the trajectory of interest is given by eliminating all states that belong to the regions of non-interest. The trace of the state trajectory is generated based on the trajectory of interest as defined next: 


\begin{mydef}\label{trace_traj}
\normalfont
Given a state trajectory ${\bf {x}} = x_0 x_1 \cdots $, the \textit{trace} of ${\bf {x}}$ is given by ${\rm trace} ({\bf {x}})= \rho_0 \rho_1 \cdots $, which is generated over the corresponding trajectory of interest ${\bf {x}}_I = x_{k_0} x_{k_1} x_{k_2} \cdots $, satisfying the following rules for all $\ell \in \mathbb{N}$, $i \in \mathbb{N}$: 
\begin{enumerate}
\item $\rho_0 = h_X(x_{k_0})$; 
\item If $\rho (\ell) = h_X (x_{k_i})$ 
and there exists $j > i$ such that $h_X (x_{k_i}) = h_X (x_{k_{i+1}}) = \cdots = h_X (x_{k_{j}})$, $h_X (x_{k_{j}}) \neq h_X (x_{k_{j+1}})$, then $\rho_{\ell +1} = h_X (x_{k_{j+1}})$. 
\item If $\rho_\ell = h_X (x_{k_i})$, 
and $h_X (x_{k_j}) = h_X (x_{k_i})$, $\forall j \geq i$, then $\rho_m = \rho_\ell$, $\forall m \geq \ell$. \qedwhite
\end{enumerate}
\end{mydef}
For example, assume that $x_k \in {\cal R}_1$ for $k = 0, 1, 2$, $x_k \in {\cal X}_{\backslash {\cal R}}$ for $k = 4, 5$ and $x_k \in {\cal R}_2$ for $k = 6, 7, 8 \cdots$. This means that the state initially starts from ${\cal R}_1$, leave ${\cal R}_1$ for entering ${\cal R}_2$, and remains there for all the time afterwards. 
The trajectory of interest is given by $x_0 x_1 x_2 x_6 x_7 x_8 \cdots $. The trace of the trajectory according to \rdef{trace_traj} is $\rho = \rho_0 \rho_1 \rho_2 \cdots $ with $\rho_0 = h_X(x_0) = \pi_1$ and $\rho_\ell = \pi_2$, $\forall \ell \geq 1$. 

\begin{mydef}\label{satisfaction_formula}
Given a state trajectory ${\bf x} = x_0 x_1 x_2 \cdots $, we say that ${\bf x}$ satisfies the formula $\phi$ if and only if the corresponding trace according to \rdef{trace_traj} satisfies $\phi$, i.e., $\pi_{seq} = {\rm trace} ({\bf x}) \models \phi$. 
\end{mydef}

\subsection{Communication strategy}
During online implementation, the plant needs to receive suitable control signals from the networked controller for achieving the LTL specification $\phi$. 
In this paper, we employ a \textit{self-triggered strategy}\cite{heemels2012a}, which is known to be one of the most useful energy-aware communication protocols. 
To provide the overview, suppose that at $k \in \mathbb{N}$ the plant transmits the state information $x_k$ to the controller. 
Then, the controller not only computes suitable control signals to be applied, but also the next communication time $k' > k$. Once $k'$ is determined, a set of control actions from $k$ to $k'-1$, say $u_k, \ldots, u_{k'-1}$, are transmitted in packets to the plant, and these are applied in an open-loop fashion. Thus, no communication occurs until the next communication time $k'$. Moreover, as the controllers are given in an open-loop fashion, the plant does not need to measure the state information until $k'$. As a consequence, both sensor and communication systems do not need to be used for all times until the next communication time, and, therefore, energy savings of battery powered devices can be achieved. 

\subsection{Problem formulation}
The goal of this paper is to design control and self-triggered strategies, such that the resulting state trajectory satisfies the desired LTL formula. That is: 
\begin{mypro}\label{problem_formulation}
\normalfont 
For a given LTL specification $\phi$, design both control and self-triggered strategies, such that the resulting state trajectory satisfies $\phi$. \qedwhite 
\end{mypro}

\section{Constructing transition system} 
As a first step to solve \rpro{problem_formulation}, we construct a finite transition system that represents an \textit{abstracted} model to describe the behavior of the control systems in \req{sys}. Specifically, we aim at obtaining ${\cal T} =  (S , s_{init} , \delta, \Pi, g )$, where $S = \{ s_1, \ldots, s_{N_I} \}$ is a set of symbolic states, $s_{init}  \in S  $ is an initial state, $\delta \subseteq S \times S  $ is a transition relation, $\Pi = \{\pi_1, \ldots, \pi_{N_I}\}$ is a set of atomic propositions, and $g  : S  \rightarrow 2^\Pi$ is a labeling function. Roughly speaking, each symbol $s_i \in S$ indicates the region of interest ${\cal R}_i$ (i.e., the region having the same index $i$). To relate each symbol to the corresponding region of interest, let $\Gamma : S  \rightarrow {\cal R}$ be the mapping given by 
\begin{equation}\label{gamma}
\Gamma ({s}_i) = {\cal R}_i, \ \ \forall i \in \mathbb{N}_{1:N_I}. 
\end{equation}
Conversely, let $\Gamma^{-1}: {\cal R}  \rightarrow S $ be the mapping from each region of interest to the corresponding symbolic state. The symbol $s_{init} \in S$ represents the symbolic state associated with ${\cal R}_{init}$, i.e., $s_{init} = \Gamma^{-1} ({\cal R}_{init})$. The labeling function $g (s_i)$ outputs the atomic proposition assigned to ${\cal R}_i$, i.e., $g(s_i ) = \pi_i$. The transition relation $(s_i, s_j) \in \delta$ indicates that every $x\in {\cal R}_i$ can be steered to ${\cal R}_j$ in finite time. A more formal definition of $\delta$ is provided below.

\subsection{Definition of reachability} 
To characterize $\delta$ in the transition system, let us analyze the reachability among the regions of interest. To this end, consider a pair $({\cal R}_i, {\cal R}_j) \in {\cal R} \times  {\cal R}$ with $i\neq j$. For notational simplicity, let ${\cal X}_{ij} \subset {\cal X}$ be given by 
\begin{equation}\label{xij}
{\cal X}_{ij} = {\cal X} \backslash \bigcup_{n\in \mathbb{N}_{\backslash ij}} {\cal R}_{n},  
\end{equation}
where $\mathbb{N}_{\backslash ij} = \{1, \ldots, N_I \}\backslash \{i, j \}$. 
That is, ${\cal X}_{ij}$ represents the free space that we exclude all regions of interest other than ${\cal R}_i$ and ${\cal R}_j$. Note that ${\cal X}_{ij}$ is a polygonal set that can be a non-convex region. 
Whether the transition is allowed in ${\cal T}$, from $s_i = \Gamma^{-1} ({\cal R}_i)$ to $s_j = \Gamma^{-1} ({\cal R}_j)$ (i.e., $(s_i, s_j) \in \delta$), is determined according to the following notion of \textit{reachability}: 

\begin{mydef}[Reachability]\label{reachable_def}
\normalfont 
The state is \textit{reachable} from ${\cal R}_i$ to ${\cal R}_j$ ($i\neq j$), which we denote by $(s_i, s_j) \in \delta$, if there exists a finite $L \in \mathbb{N}_+$ such that the following holds: for any $x_0 \in {\cal R}_i$ and the disturbance sequence $w_0, w_1, \ldots, w_{L-1} \in {\cal W}$, there exist $u_0, u_1, \ldots, u_{L-1}\in {\cal U}$ such that the resulting state trajectory $x_1, \ldots, x_L$ in accordance with \req{sys} satisfies 
\begin{enumerate}
\renewcommand{\labelenumi}{(C.\arabic{enumi})}
\item $x_L\in  {\cal R}_j$,
\item $x_\ell \in  {\cal X}_{ij}, \ \forall \ell \in \mathbb{N}_{0:L}$, 
\item If $x_{\ell'} \in {\cal R}_j$ for some $\ell' \in \mathbb{N}_{1:L}$, then $x_{\ell} \notin {\cal R}_i$, $\forall \ell \in \mathbb{N}_{\ell':L}$. \qedwhite
\end{enumerate}
\end{mydef}
Based on \rdef{reachable_def}, reachability holds from ${\cal R}_{i}$ to ${\cal R}_j$ if there exists a controller such that any state in ${\cal R}_i$ can be steered to ${\cal R}_j$ in finite time. Moreover, we require by (C.2) that the state needs to {avoid} any other region of interest apart from ${\cal R}_i$ and ${\cal R}_j$. Also, (C.3) implies that once the state enters ${\cal R}_j$ it must not enter ${\cal R}_i$ afterwards. The conditions (C.2), (C.3) are essentially required to guarantee that the \textit{trace} of the state trajectory satisfies the following property: 
\begin{myprop}\label{trace_prop}
For every $x_0 \in {\cal R}_i$, the trace of the state trajectory $x_0, x_1, \ldots, x_L$ satisfying (C.1) -- (C.3) is $\pi_i \pi_j$. 
\end{myprop}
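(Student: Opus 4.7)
The plan is to unpack the definitions of trajectory of interest and trace on the finite segment $x_0, x_1, \ldots, x_L$, and then use conditions (C.1)--(C.3) to pin down exactly which atomic propositions can appear and in what order.

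First I would classify each state by its label. From (C.2) we have $x_\ell \in \mathcal{X}_{ij}$ for every $\ell \in \mathbb{N}_{0:L}$, so by \eqref{xij} the state is disjoint from every region of interest except possibly $\mathcal{R}_i$ and $\mathcal{R}_j$. Hence $h_X(x_\ell) \in \{\pi_i, \pi_j, \pi_0\}$ for all such $\ell$. Next, I would introduce the first hitting time of $\mathcal{R}_j$,
\begin{equation*}
L^\ast = \min\{\ell \in \mathbb{N}_{1:L} : x_\ell \in \mathcal{R}_j\},
\end{equation*}
which is well defined because $x_L \in \mathcal{R}_j$ by (C.1). By (C.3) applied with $\ell' = L^\ast$, no $x_\ell$ with $\ell \in \mathbb{N}_{L^\ast:L}$ lies in $\mathcal{R}_i$, so $h_X(x_\ell) \in \{\pi_j, \pi_0\}$ on that tail. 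By minimality of $L^\ast$, for $\ell \in \mathbb{N}_{0:L^\ast-1}$ the state is not in $\mathcal{R}_j$, leaving $h_X(x_\ell) \in \{\pi_i, \pi_0\}$ on the head.

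I would then assemble the trajectory of interest according to \rdef{trajectry_of_interest} by discarding every $x_\ell$ with $h_X(x_\ell) = \pi_0$. Since $x_0 \in \mathcal{R}_i$ and $x_L \in \mathcal{R}_j$, the trajectory of interest is non-empty, begins with $x_0$ (label $\pi_i$), and ends with $x_L$ (label $\pi_j$). From the classification above, every retained state on the head has label $\pi_i$ and every retained state on the tail has label $\pi_j$. So the sequence of labels along the trajectory of interest is a block of $\pi_i$'s immediately followed by a block of $\pi_j$'s, with no alternation.

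Finally, I would apply \rdef{trace_traj} to this label sequence: rule~1 sets $\rho_0 = \pi_i$, and rule~2 collapses the maximal initial run of $\pi_i$'s and advances to the first differing label, which is $\pi_j$; after that, all remaining states in the trajectory of interest carry the label $\pi_j$, so rule~3 forces $\rho_m = \pi_j$ for every subsequent index. The resulting trace is therefore exactly $\pi_i \pi_j$, as claimed. The proof is essentially bookkeeping; the only delicate step is recognizing that (C.3) is what prevents the trace from containing a spurious third symbol such as $\pi_i$ reappearing after $\pi_j$, which would otherwise be allowed by (C.2) alone.
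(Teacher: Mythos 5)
Your proof is correct. The paper actually states this proposition without a formal proof, offering only the surrounding discussion with counterexamples showing how violating (C.2) or (C.3) breaks the claim; your argument --- restricting the possible labels to $\{\pi_i,\pi_j,\pi_0\}$ via (C.2), splitting the trajectory at the first hitting time of ${\cal R}_j$, and using (C.3) to forbid $\pi_i$ on the tail --- is precisely the formalization of that discussion, and the bookkeeping with Definitions 1 and 2 is handled correctly.
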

\rprop{trace_prop} implies that the {trace} of the state trajectory satisfying (C.1) -- (C.3), which is generated according to the rules in \rdef{trace_traj}, is consistent with the trace for the trasition from $s_i$ to $s_j$ (i.e., $g(s_i) g(s_j)$). This consistency is important, since otherwise the accepting trace over ${\cal T}$ might not lead to the acceptance of the trace over the actual state trajectory. For example, the state trajectory $x_0x_1x_2$ $(L=2)$ satisfying $x_0 \in {\cal R}_i$, $x_{1} \in {\cal R}_m$, $x_{2} \in {\cal R}_j$ with $m \neq i, j$, which does \textit{not} satisfy the condition (C.2), yields the corresponding trace according to \rdef{trace_traj} as $\pi_i \pi_m \pi_j (\neq \pi_i \pi_j)$. In this case, the formula such as $\phi = \Box \neg \pi_m$ may be satisfied by $\pi_i \pi_j$ (the trace of ${\cal T}$), while, on the other hand, not be satisfied by $\pi_i \pi_m \pi_j$ (the trace of the trajectory). As another example, consider $x_0x_1x_2x_3$ $(L=3)$ satisfying $x_0 \in {\cal R}_i$, $x_{1} \in {\cal R}_j$, $x_{2} \in {\cal R}_i$ $x_3 \in {\cal R}_j$ (i.e., the trajectory traverses ${\cal R}_i{\cal R}_j{\cal R}_i{\cal R}_j$). In this case, the condition (C.3) is violated since the state enters ${\cal R}_i$ after reaching ${\cal R}_j$. The corresponding trace here is $\pi_i \pi_j \pi_i \pi_j (\neq \pi_i \pi_j)$. Since the accepting path of ${\cal T}$ satisfying $\phi$ is designed by the high level planner (see \rsec{implementation}), and the resulting state trajectory is generated based on the accepting path, the above inconsistencies could lead to the violation of $\phi$ by the state trajectory. 

Note also that our definition of reachability differs from the ones presented in \cite{belta2008a}, which, in contrast to (C.2), requires that once the state enters ${\cal R}_j$ it must remain there afterwards (i.e., if $x_{\ell'} \in {\cal R}_j$ for some $\ell' \in \mathbb{N}_{1:L}$ then $x_{\ell} \in {\cal R}_j$, $\forall \ell \in \mathbb{N}_{\ell':L}$). 
Indeed, such condition is more restrictive than (C.2) and is not required in this paper since we allow the states to enter the regions of non-interest while moving from ${\cal R}_i$ to ${\cal R}_j$. 
For example, consider $x_0x_1x_2 x_3 x_4$ satisfying $x_0 \in {\cal R}_i$, $x_{1} \in {\cal X}_{\backslash {\cal R}}$, $x_{2} \in {\cal R}_j$, $x_{3} \in {\cal X}_{\backslash {\cal R}}$, $x_{4} \in {\cal R}_j$. The corresponding trajectory of interest according to \rdef{trajectry_of_interest} is $x_0x_2 x_4$ and so the resulting trace according to \rdef{trace_traj} is $\pi_i \pi_j$. That is, while the state traverses the regions of non-interest \textit{after} reaching ${\cal R}_j$, the corresponding trace is still given by $\pi_i \pi_j$ (as long as the terminal state belongs to ${\cal R}_j$) since we eliminate all states in ${\cal X}_{\backslash {\cal R}}$ when generating the trace. 

\subsection{Proposed algorithm to analyze reachability}\label{reachability_sec}
This section provides a way to analyze reachability from ${\cal R}_i$ to ${\cal R}_j$. The proposed algorithm consists of the two steps; \textit{trajectory generation} and \textit{tube generation}. The following subsections are provided to described these steps in detail. 
\subsubsection{Trajectory generation}\label{trajectory_generation_section}
In the trajectory generation step, we generate a sample state trajectory from the \textit{nominal} system of \req{sys}, starting from a given $\hat{x}_0 \in {\cal R}_{i}$ to the desired target set ${\cal R}_{j}$. Specifically, we produce a state trajectory $\hat{x}_0, \hat{x}_1, \ldots \hat{x}_L$, and the corresponding control $\hat{u}_0, \hat{u}_1, \ldots \hat{u}_{L-1} \in {\cal U}$ for some $L\in {\mathbb{N}}_+$, such that $\hat{x}_0 = x_{cent, i}$ (recall that $x_{cent, i}$ represents the Chebyshev center of ${\cal R}_i$), 
\begin{equation}
\hat{x}_{k+1} = A \hat{x}_k + B \hat{u}_k \in {\cal X}_{ij},\ \ \forall k \in {\mathbb{N}}_{1:L-1}, 
\end{equation}
and $\hat{x}_L \in {\cal R}_{j}$. The illustration of the trajectory is shown in \rfig{gen_traj_step}. Roughly speaking, the trajectory provides a \textit{reference} that the actual states should follow to move from ${\cal R}_{i}$ to ${\cal R}_{j}$. Notice that the nominal trajectory must remain in ${\cal X}_{ij}$ which aims to fulfill the condition (C.2) in \rdef{reachable_def}. 

So far, numerous techniques have been proposed to generate the above trajectory. The most popular one may be sampling-based algorithms, such as RRT\cite{lavalle1999}, RRT*\cite{karaman2010} and their variants\cite{dang2008}, which are known to be powerful techniques to the find feasible trajectories regardless of the non-convexity of ${\cal X}_{ij}$. Alternative methods are the optimization-based approach, such as those by solving Mixed Integer Linear Programming (MILP)\cite{collision_free2} or other variants to relax the computational complexity\cite{ono2011}. In view of the many different techniques, how the trajectory generation scheme should be applied is beyond the scope of this paper; we can utilize any techniques to obtain the nominal trajectory for the reachability analysis. 

\begin{figure}[t]
   \centering
    \hspace{-0.52cm}
    \subfigure[Trajectory generation]{
      {\includegraphics[width=4.3cm]{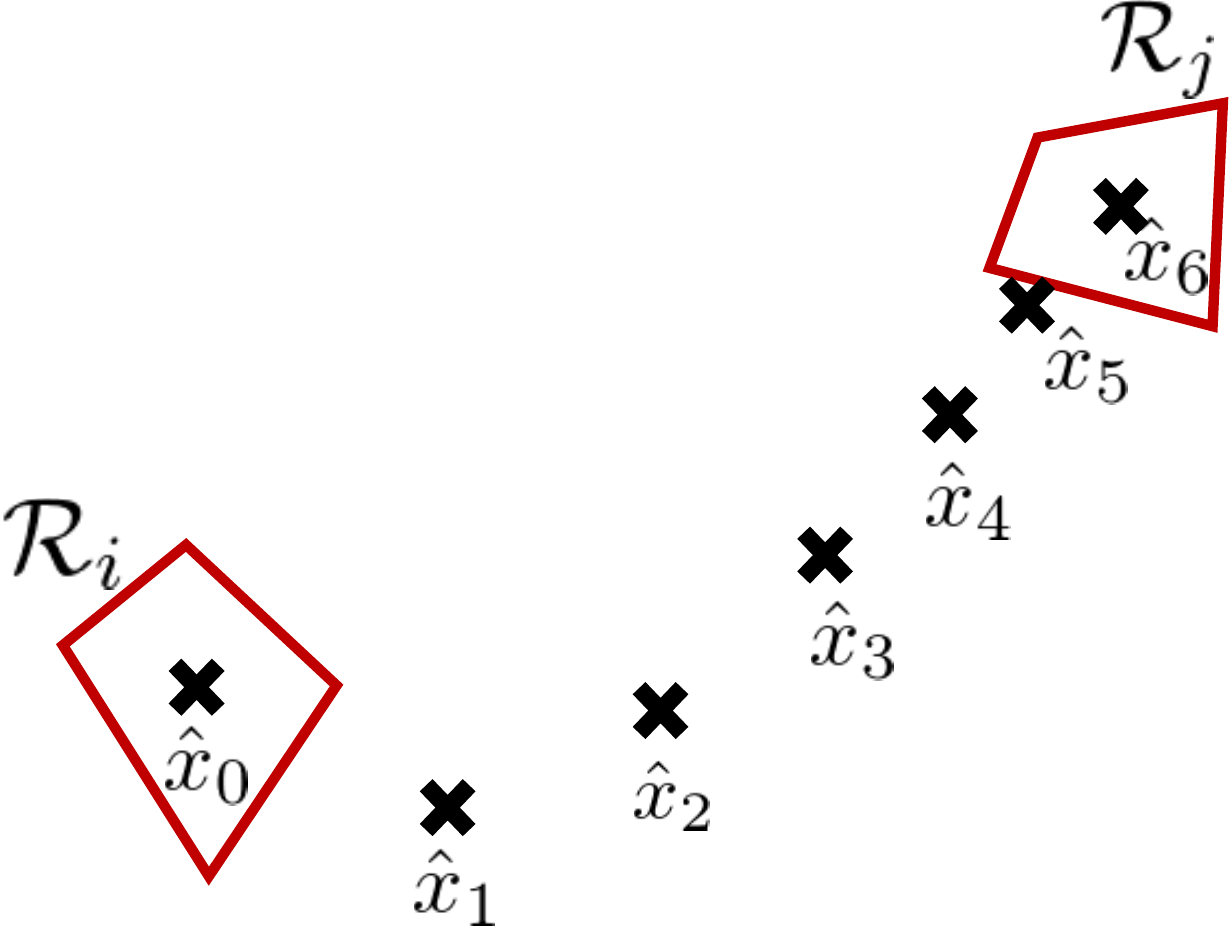}} \label{gen_traj_step}}
    \hspace{-0.5cm}
    \subfigure[Tube generation]{
      {\includegraphics[width=4.3cm]{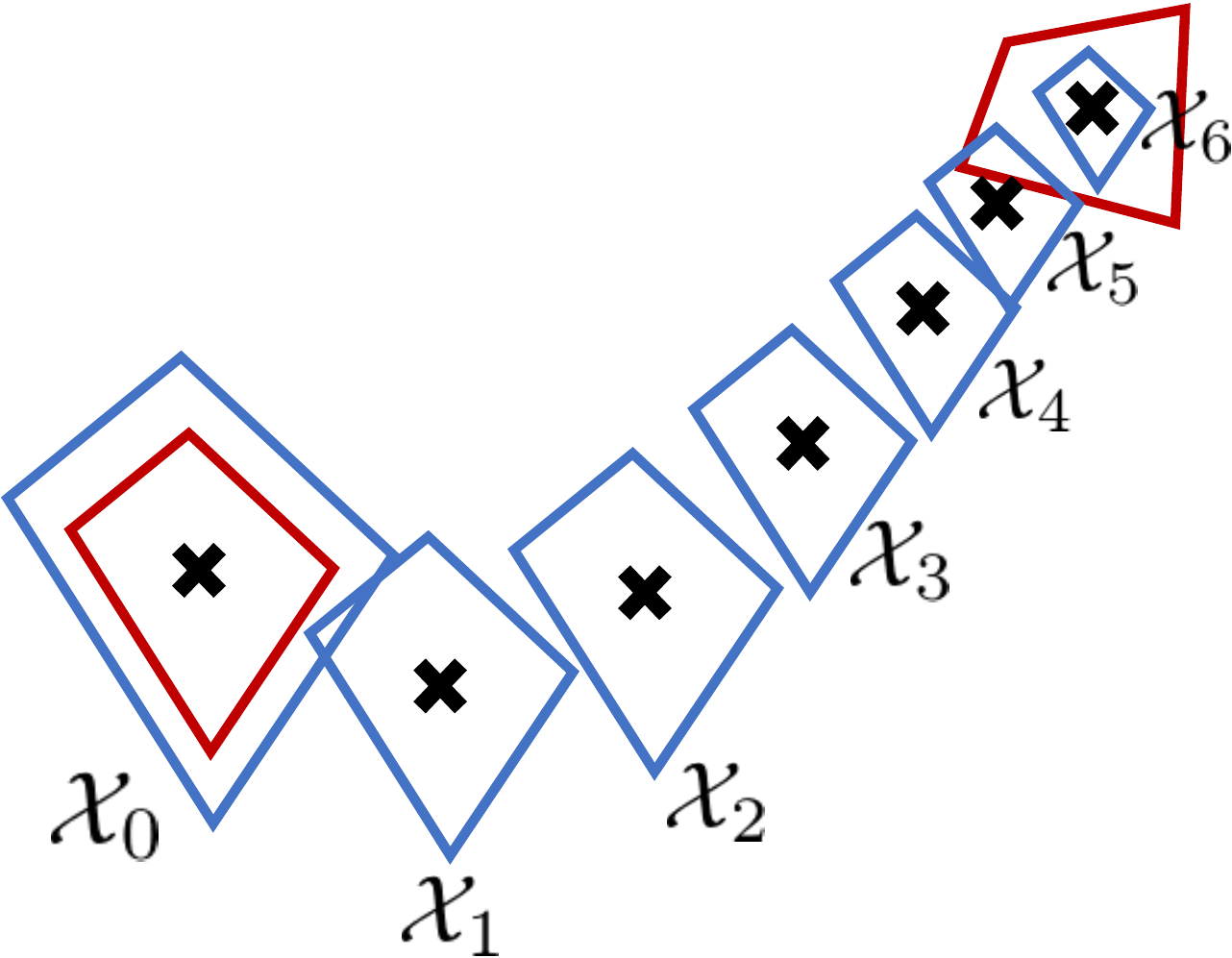}} \label{gen_tube_step}}
      \hspace{-0.5cm}
      \caption{Illustration of the generated trajectory (black marks) and tubes (regions covered by blue lines).}
\end{figure}

\subsubsection{Tube generation}\label{tube_generation}
In the tube generation step, we solve a convex program to generate a sequence of tubes (polytopes) based on the obtained nominal trajectory. The illustration of the generated tubes is shown in \rfig{gen_tube_step}. 
Specifically, we aim to generate a sequence of tubes ${\cal X}_0, {\cal X}_1, \ldots, {\cal X}_{L} \subset {\cal X}$, which are designed as 
\begin{equation}\label{xsets}
{\cal X}_\ell = \hat{x}_\ell \oplus \varepsilon_\ell {\cal Z}_{ij}
\end{equation}
for $\ell \in \mathbb{N}_{0:L}$, where $\varepsilon_\ell >0$ is the scalar decision variable and ${\cal Z}_{ij} = {\rm co} \{ z_1, \ldots, z_{p}\} \subset \mathbb{R}^n$ denotes a given polytope containing the origin in the interior. 
The set ${\cal Z}$ can be arbitrarily chosen if $0 \in {\cal Z}_{ij}$ and is not necessary to be an invariant set. A natural choice may be to characterize as ${\cal Z}_{ij} = -x_{cent,i} \oplus {\cal R}_i$, i.e., select ${\cal Z}_{ij}$ as having the same shape as ${\cal R}_i$, and the Chebyshev center is the origin. From \req{xsets}, each ${\cal X}_\ell$ is a polytope whose center is a nominal state $\hat{x}_\ell$ and the size (volume) is characterized by the scalar $\varepsilon_\ell$. 
Alternatively, each ${\cal X}_\ell$ can be expressed by a convex hull ${\cal X}_\ell = {\rm co} \{ x_{\ell,1}, \ldots, x_{\ell,p}\}$, where 
$x_{\ell,s} = \hat{x}_\ell + \varepsilon_\ell z_s$, $s \in \mathbb{N}_{1:p}$. 
Let $\bar{\varepsilon}_\ell \in \mathbb{R}$, $\ell \in \mathbb{N}_{0:L}$ be given by  
\begin{align}\label{barvarepsilon}
\bar{\varepsilon}_\ell = {\rm max} \{\varepsilon > 0 \ |\ \hat{x}_\ell \oplus \varepsilon {\cal Z}_{ij} \subseteq {\cal X}_{ij} \}. 
\end{align}
That is, $\bar{\varepsilon}_\ell$ denotes the maximum value of $\varepsilon$ such that the corresponding set $\hat{x}_\ell \oplus \varepsilon {\cal Z}_{ij}$ belongs to ${\cal X}_{ij}$. Note that this can be easily computed by searching the maximum value of $\varepsilon$ with the property that the intersection between $\hat{x}_\ell \oplus \varepsilon {\cal Z}_{ij}$ and ${\cal X}_{ij}$ is empty. 

Using the above notations, we propose the following optimization problem: 
\begin{mypro}\label{opt_problem}
For given $\hat{x}_{0:L}$, $\bar{\varepsilon}_{0:L}$ and ${\cal Z}_{ij}$, find ${\cal X}_0, {\cal X}_1, \ldots, {\cal X}_L$ and ${\cal U}_0$, ${\cal U}_1$, $\ldots$, ${\cal U}_L$ with ${\cal U}_\ell = {\rm co} \{u_{\ell, 1}, \ldots, u_{\ell, p}\}$, by solving the following problem : 
\begin{equation}\label{cost_func}
\underset{\varepsilon_{0:L},\ {\cal U}_{0:L}}{\max}\ \varepsilon_0
\end{equation}
subject to 
\begin{numcases}
   { }
     \varepsilon_\ell \leq \bar{\varepsilon}_\ell, \ \  \forall \ell \in \mathbb{N}_{0:L} \label{epsilon_const}\\
     {\cal X}_L \subseteq {\cal R}_{j}, \label{terminal_constraints} \\
     A x_{\ell,s} + B u_{\ell,s} \in {\cal X}_{\ell+1} \ominus {\cal W}, \label{constraint} \\
     u_{\ell ,s} \in {\cal U}, \label{constraint_u}
\end{numcases}
where \req{constraint}, \req{constraint_u} must hold for all $\ell \in \mathbb{N}_{0:L-1} , \ s \in \mathbb{N}_{1:p}$. 
\qedwhite
\end{mypro}

The constraint \req{epsilon_const} indicates that the scalars $\varepsilon_\ell$ must be upper bounded by $\bar{\varepsilon}_\ell$, and is utilized to guarantee that the generated tubes remain inside ${\cal X}_{ij}$. 
The constraint \req{terminal_constraints} indicates that the terminal tube must belong to ${\cal R}_j$, which guarantees that the state at the final time step belongs to ${\cal R}_j$. The constraint \req{constraint} indicates the condition to relate the neighboring tubes; it guarantess that all vertices of the tube ${\cal X}_\ell$ can move to the next one ${\cal X}_\ell$ that is constrained by the disturbance set ${\cal W}$. 
Overall, the problem aims at finding a sequence of tubes satisfying all the constraints as described above, by maximizing the volume of ${\cal X}_0$ (i.e., the scalar $\varepsilon_0$). Here, maximizing the variable $\varepsilon_0$ is motivated by the following main result: 

\begin{mythm}\label{reachable_lem}
Suppose that \rpro{opt_problem} has a solution, providing optimal tubes and controllers denoted as ${\cal X} _{0}, \ldots, {\cal X}_L$, and ${\cal U} _{0}, \ldots, {\cal U}_{L-1}$, respectively. Then, reachability holds from ${\cal R}_i$ to ${\cal R}_j$, if: 
\begin{enumerate}
\renewcommand{\labelenumi}{(D.\arabic{enumi})}
\item ${\cal R}_i \subseteq {\cal X} _0$;
\item if ${\cal X}_{\ell'} \cap {\cal R}_j \neq \emptyset$ for $\ell' \in \mathbb{N}_{1:L-1}$, then ${\cal X}_{\ell} \cap {\cal R}_i = \emptyset$, $\forall \ell \in \mathbb{N}_{\ell':L-1}$. 
\end{enumerate}
\end{mythm}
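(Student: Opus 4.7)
The plan is to establish the reachability conditions (C.1)--(C.3) of Definition~1 by proving, via induction on $\ell$, that one can always keep the state inside the tube $\mathcal{X}_\ell$ through a suitable convex-combination feedback built from the vertex controls $u_{\ell,s}$ returned by Problem~2. The tubes are polytopes given as $\mathcal{X}_\ell = \mathrm{co}\{x_{\ell,1},\ldots,x_{\ell,p}\}$, and the optimization guarantees that each vertex pair $(x_{\ell,s},u_{\ell,s})$ satisfies $Ax_{\ell,s}+Bu_{\ell,s}\in\mathcal{X}_{\ell+1}\ominus\mathcal{W}$ with $u_{\ell,s}\in\mathcal{U}$. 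These vertex-level inclusions are what allow a one-step propagation of the tube inclusion property, and the rest of the argument is essentially bookkeeping against the regions of interest.

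First I would formalize the feedback construction. Fix any $x\in\mathcal{X}_\ell$ and write $x=\sum_{s=1}^{p}\lambda_s x_{\ell,s}$ with $\lambda_s\ge 0$, $\sum_s\lambda_s=1$; then set $u:=\sum_{s=1}^{p}\lambda_s u_{\ell,s}$. Convexity of $\mathcal{U}$ together with $u_{\ell,s}\in\mathcal{U}$ gives $u\in\mathcal{U}$, and convexity of the Pontryagin difference $\mathcal{X}_{\ell+1}\ominus\mathcal{W}$ (which inherits convexity from $\mathcal{X}_{\ell+1}$) yields
\begin{equation}
Ax+Bu=\sum_{s=1}^{p}\lambda_s\bigl(Ax_{\ell,s}+Bu_{\ell,s}\bigr)\in\mathcal{X}_{\ell+1}\ominus\mathcal{W}.
\end{equation}
Hence for any admissible disturbance $w_\ell\in\mathcal{W}$, the successor state $x_{\ell+1}=Ax+Bu+w_\ell$ belongs to $\mathcal{X}_{\ell+1}$ by the definition of the Pontryagin difference. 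Combined with (D.1), which guarantees $\mathcal{R}_i\subseteq\mathcal{X}_0$, a straightforward induction shows that for every initial state $x_0\in\mathcal{R}_i$ and every disturbance realization $w_0,\ldots,w_{L-1}\in\mathcal{W}$, one can pick controls $u_0,\ldots,u_{L-1}\in\mathcal{U}$ keeping $x_\ell\in\mathcal{X}_\ell$ for all $\ell\in\mathbb{N}_{0:L}$.

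From this tube-containment, the three reachability conditions follow almost directly. Condition (C.1) is obtained from the terminal constraint \eqref{terminal_constraints}: $x_L\in\mathcal{X}_L\subseteq\mathcal{R}_j$. Condition (C.2) follows from $\varepsilon_\ell\le\bar\varepsilon_\ell$, which via \eqref{barvarepsilon} forces $\mathcal{X}_\ell\subseteq\mathcal{X}_{ij}$, hence $x_\ell\in\mathcal{X}_{ij}$ for all $\ell$. For condition (C.3), suppose $x_{\ell'}\in\mathcal{R}_j$ for some $\ell'\in\mathbb{N}_{1:L}$; if $\ell'\le L-1$, then $x_{\ell'}\in\mathcal{X}_{\ell'}$ implies $\mathcal{X}_{\ell'}\cap\mathcal{R}_j\ne\emptyset$, so by (D.2) we have $\mathcal{X}_\ell\cap\mathcal{R}_i=\emptyset$ and therefore $x_\ell\notin\mathcal{R}_i$ for every $\ell\in\mathbb{N}_{\ell':L-1}$; the case $\ell=L$ is covered by $x_L\in\mathcal{R}_j$ and disjointness $\mathcal{R}_i\cap\mathcal{R}_j=\emptyset$. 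The boundary case $\ell'=L$ reduces to the same disjointness argument.

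The only delicate step is the legitimacy of the convex-combination feedback, in particular verifying that the inclusion $Ax+Bu\in\mathcal{X}_{\ell+1}\ominus\mathcal{W}$ is preserved by convex combinations of vertex points; this is where the convexity of both $\mathcal{U}$ and the Pontryagin difference is essential, and it is the one place where the argument could fail if, say, $\mathcal{W}$ were not a set that keeps $\mathcal{X}_{\ell+1}\ominus\mathcal{W}$ convex. Everything else is either a direct consequence of the constraints imposed in Problem~2 or a simple set-theoretic manipulation of the hypotheses (D.1)--(D.2).
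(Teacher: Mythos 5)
Your proof is correct and follows essentially the same route as the paper's: the convex-combination vertex feedback propagating the tube inclusion $x_\ell\in{\cal X}_\ell$ one step at a time via the constraint $Ax_{\ell,s}+Bu_{\ell,s}\in{\cal X}_{\ell+1}\ominus{\cal W}$, then reading off (C.1)--(C.3) from the terminal constraint, the bound $\varepsilon_\ell\le\bar\varepsilon_\ell$, and (D.2). Your treatment is in fact slightly more careful than the paper's on the boundary case $\ell'=L$ of (C.3) (handled via disjointness of ${\cal R}_i$ and ${\cal R}_j$) and in making explicit the convexity of ${\cal X}_{\ell+1}\ominus{\cal W}$ underpinning the convex-combination step.
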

\begin{proof}
Let ${\cal X} _{0}, \ldots, {\cal X}_L$, ${\cal U} _{0}, \ldots, {\cal U}_{L-1}$ be the optimal solution of the tubes and the control sets by solving \rpro{opt_problem} denoted by ${\cal X} _{\ell} = {\rm co} \{ x _{1, \ell}, x _{2, \ell}, \ldots, x _{p , \ell} \}$, ${\cal U} _{\ell}  = {\rm co} \{ u _{1, \ell}, u _{2, \ell}, \ldots, u _{p , \ell} \}$, $\forall \ell \in \mathbb{N}_{0:L}$, respectively. 
In the following, we first show that for every $x_0 \in {\cal X} _0$, there exist $u_0, u_1, \ldots, u_{L-1} \in {\cal U}$ such that $x_\ell \in {\cal X}_\ell$, $\forall \ell \in \mathbb{N}_{0:L}$. To inductively show this, assume $x_\ell \in {\cal X}_\ell$ for some $\ell \in \mathbb{N}_{0:L}$. Since $x_\ell \in {\cal X}_\ell$,  there exist $\lambda_s \in [0, 1]$, $s\in\mathbb{N}_{1:p}$ such that $x_\ell = \sum^{p} _{s=1} \lambda_s x _{s, p}$ holds. Let $u_{\ell} \in \mathbb{R}^m$ be given by $u_\ell = \sum^{p} _{s=1} \lambda_s u _{s, p} \in {\cal U}$. We then obtain 
\begin{align}
x_{\ell+1} &= A x_\ell + B u_\ell + w_\ell \notag \\
             &= \sum^{p} _{s=1} \lambda_s ( A x _{s,\ell} + B u _{s,\ell} ) + w_\ell \in  {\cal X} _{\ell+1},
\end{align}
$\forall w_\ell \in {\cal W}$, where we have used the fact that $Ax _{s,\ell} + B u _{s, \ell} \in {\cal X} _{\ell+1} \ominus {\cal W}$, $\forall s \in \mathbb{N}_{1:p}$ from \req{constraint}. 
Therefore, we obtain 
\begin{equation}\label{relation_proven}
x_\ell \in {\cal X} _\ell \implies \exists u \in {\cal U},\ \ {\rm s.t.},\ x_{\ell + 1} \in {\cal X} _{\ell+1}, \forall w_\ell \in {\cal W} 
\end{equation}
and the above holds for any $\ell \in \mathbb{N}_{0:L-1}$. Thus, for every $x_0 \in {\cal X} _0$, there exist $u_0, u_1, \ldots, u_{L-1} \in {\cal U}$ such that $x_\ell \in {\cal X}_\ell$, $\forall \ell \in \mathbb{N}_{0:L}$ holds. 

Let us now show that the reachability holds if the conditions (D.1), (D.2) are satisfied. 
Since ${\cal R}_i \subseteq {\cal X}_0$, it follows that for every $x_0 \in {\cal R} _i (\subseteq {\cal X}_0) $, there exist $u_0, u_1, \ldots, u_{L-1} \in {\cal U}$ such that $x_\ell \in {\cal X}_\ell$, $\forall \ell \in \mathbb{N}_{0:L}$ holds.   
Thus, the condition (C.1) in \rdef{reachable_def} holds since ${\cal X}_L \subseteq {\cal R}_j$ from \req{terminal_constraints}. Moreover, from \req{epsilon_const} and \req{barvarepsilon}, it follows that ${\cal X} _\ell \subseteq {\cal X}_{ij}$, $\forall \ell \in \mathbb{N}_{0:L}$, which means that the trajectory with $x_\ell \in {\cal X}_\ell$, $\forall \ell \in \mathbb{N}_{0:L}$ satisfies the condition (C.2). Finally, from (D.2), the trajectory with $x_\ell \in {\cal X}_\ell$, $\forall \ell \in \mathbb{N}_{0:L}$ satisfies the following condition: 
\begin{equation}
x_{\ell'} \in {\cal R}_j,\ \ell' \in \mathbb{N}_{1:L} \implies x_{\ell} \notin {\cal R}_i,\ \forall \ell \in \mathbb{N}_{\ell':L}, 
\end{equation}
which directly follows that (C.3) holds. Thus, if (D.1), (D.2) are satisfied, then the reachability holds from ${\cal R}_i$ to ${\cal R}_j$. This completes the proof. 
\end{proof}

\rthm{reachable_lem} indicates that reachability holds from ${\cal R}_i$ to ${\cal R}_j$, if the computed initial tube ${\cal X} _0$ is \textit{large enough} to satisfy ${\cal R}_i \subseteq {\cal X} _0$, and once some tube intersects ${\cal R}_j$, it does not intersect ${\cal R}_i$ afterwards. The latter condition is required to satisfy (C.3) in \rdef{reachable_def}, which aims to exclude the case when the state moves back to ${\cal R}_i$ after the entrance to ${\cal R}_j$. Consequently, reachability from ${\cal R}_i$ to ${\cal R}_j$ (i.e., if $(s_i, s_j) \in \delta$ holds) can be checked by solving \rpro{opt_problem} and checking if (D.1) and (D.2) hold. 

For the notational use in the next section, let $L_{x_0}$, $L_{x}$, $L_{u_0}$, $L_u : {\cal R}\times{\cal R} \rightarrow 2^{\cal X}$ be the mappings given by 
\begin{align}
L_{x_0} ({\cal R}_i, {\cal R}_j) &= {\cal X} _{0:L}, \ \ \ L_{x} ({\cal R}_i, {\cal R}_j) = {\cal X} _{1:L} \label{opttube}\\
L_{u_0} ({\cal R}_i, {\cal R}_j) &= {\cal U} _{0:L},\ \ \ L_{u} ({\cal R}_i, {\cal R}_j) = {\cal U} _{1:L}, \label{optinput}
\end{align}
i.e., the above mappings indicate the optimal sequence of tubes and the control sets as the solution to \rpro{opt_problem} for a given pair of two regions of interest. 

\begin{myrem}[On increasing possiblity to achieve reachability]
Since the initial tube is enlarged from \textit{only} a single point inside ${\cal R}_i$, it may be difficult to fulfill the condition ${\cal R}_i \subseteq {\cal X} _0$. 
To increase the chance to achieve reachability, one can generate nominal trajectories from \textit{multiple} points in ${\cal R}_i$, rather than only from a single point in ${\cal R}_i$. Namely, a set of state samples  $\hat{x}^{(1)} _0, \hat{x}^{(2)} _0, \ldots, \hat{x}^{(M)} _0$ are uniformly generated from ${\cal R}_i$, and for each $\hat{x}^{(m)} _0$, $m \in \mathbb{N}_{1:M}$, we produce the nominal trajectories and the tubes according to the above procedure. Then, if the union of all initial tubes contain ${\cal R}_i$, say, ${\cal R}_i \subseteq (\cup^M _{m=1}{\cal X}^{(m)} _0)$, toghther with the condition that (D.2) is satisfied, it is guarateed that reachability holds from ${\cal R}_i$ to ${\cal R}_j$. \qedwhite 
\end{myrem}

\subsection{Adding self-loops via invariance property}\label{invariance_sec}
In the previous subsection, we provide a way to analyze if $(s_i, s_j) \in \delta$ with $i\neq j$. To complete the characterization of $\delta$, one also needs to check if the self-loop exists, i.e., $(s_i, s_i) \in \delta$ for all $i\in\mathbb{N}_{1:N_I}$. 
Since the existence of the self-loop $(s_i, s_i) \in \delta$ implies that there exists a controller such that the state remains in ${\cal R}_i$ for all time, we need to check that the set ${\cal R}_i$ has an \textit{invariance property}, as the definition given below: 

\begin{mydef}[Robust controlled invariant set\cite{borrelli}]\label{lambda_contractive}
The set ${\cal R}_i$ is said to be a robust controlled invariant set, which we denote by $(s_i, s_i) \in \delta$, if and only if there exists a control law $\kappa : \mathbb{R}^n \rightarrow \mathbb{R}^m$ such that $x\in {\cal R}_i \implies Ax + B\kappa(x) + w \in {\cal R}_i$, $\forall w \in {\cal W}$. 
\end{mydef}

Since ${\cal R}_i$ is a polytope and we consider linear systems as in \req{sys}, the invariance property of ${\cal R}_i$ can be easily checked according to the following proposition (see, e.g., \cite{borrelli,blanchini1999a}): 
\begin{myprop}
Let $v_{1,i}, v_{2,i}, \ldots , v_{n_i, i}$ be all the vertices of ${\cal R}_i$. 
Then, ${\cal R}_i$ is a robust controlled invariant set if there exist $u_{1, i}, u_{2, i}, \ldots, u_{n_i, i} \in {\cal U}$ such that 
\begin{equation}\label{invariance_property}
A v_{n, i} + B u_{n, i} \in {\cal R}_i \ominus {\cal W},\ \forall n \in \mathbb{N}_{1:n_i}.
\end{equation} 
\end{myprop}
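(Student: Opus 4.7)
The plan is to invoke convexity twice: once to lift the vertex-wise control assignment to a control law defined on all of $\mathcal{R}_i$, and once to transport the one-step image back into $\mathcal{R}_i$ using the definition of the Pontryagin difference.

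First I would pick an arbitrary $x \in \mathcal{R}_i$. Since $\mathcal{R}_i$ is a polytope with vertices $v_{1,i}, \ldots, v_{n_i, i}$, there exist barycentric coordinates $\lambda_1, \ldots, \lambda_{n_i} \geq 0$ with $\sum_n \lambda_n = 1$ such that $x = \sum_{n=1}^{n_i} \lambda_n v_{n,i}$. I would then define the candidate control law
\begin{equation}
\kappa(x) = \sum_{n=1}^{n_i} \lambda_n u_{n,i}.
\end{equation}
Since each $u_{n,i} \in \mathcal{U}$ and $\mathcal{U}$ is convex (a polytopic set, as stated in the problem setup), $\kappa(x) \in \mathcal{U}$.

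Next I would compute the one-step update and exploit linearity of the dynamics:
\begin{equation}
A x + B \kappa(x) = \sum_{n=1}^{n_i} \lambda_n \bigl( A v_{n,i} + B u_{n,i} \bigr).
\end{equation}
By the hypothesis, each term $A v_{n,i} + B u_{n,i}$ lies in $\mathcal{R}_i \ominus \mathcal{W}$, and this set is convex (being the Pontryagin difference of a convex polytope and a convex set), so the convex combination $A x + B \kappa(x)$ also lies in $\mathcal{R}_i \ominus \mathcal{W}$. Finally, by the very definition of $\ominus$, adding any $w \in \mathcal{W}$ yields $A x + B \kappa(x) + w \in \mathcal{R}_i$, which is precisely the robust controlled invariance property required by Definition 4.

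I do not anticipate any genuine obstacle: the argument is a standard vertex-to-convex-hull lifting. The only subtlety worth flagging is that the barycentric coordinates $\{\lambda_n\}$ are in general not unique when $n_i > n+1$, but this nonuniqueness is harmless because we only need one valid selector $\kappa$, not a canonical one; any measurable selection of barycentric coordinates yields a valid control law, and no further regularity of $\kappa$ is demanded by Definition 4.
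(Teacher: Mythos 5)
Your proof is correct: the vertex-interpolation argument (barycentric coordinates on $\mathcal{R}_i$, convexity of $\mathcal{U}$ and of $\mathcal{R}_i \ominus \mathcal{W}$, and the definition of the Pontryagin difference) establishes exactly the robust controlled invariance required by Definition~4. The paper does not write out a proof of this proposition, deferring instead to the cited references, but your argument is the standard one and is precisely the same convex-combination technique the authors use explicitly in the proof of Theorem~1 (equation \req{relation_proven}), so there is nothing to add.
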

Therefore, the set ${\cal R}_i$ has an invariance property if we can find a set of control inputs such that each vertex of ${\cal R}_i$ can be driven into the interior of ${\cal R}_i$ that is tightened by ${\cal W}$. 

Suppose that we can find $u_{1, i}, u_{2, i}, \ldots, u_{n_i, i} \in {\cal U}$ such that \req{invariance_property} holds and let ${\cal U}_i = {\rm co} \{ u_{1, i}, u_{2, i}, \ldots, u_{n_i, i}\}$. As with \req{opttube} and \req{optinput}, we provide the mappings $L_{x_0}$, $L_{x}$, $L_{u_0}$, $L_u$ for the pair of the \textit{same} regions of interest be given by 
\begin{align}
L_{x_0} ({\cal R}_i, {\cal R}_i) &= {\cal R}_i, \ \ \ L_{x} ({\cal R}_i, {\cal R}_i) = {\cal R}_i \label{opttubesame}\\
L_{u_0} ({\cal R}_i, {\cal R}_i) &= {\cal U} _{i},\ \ \ L_{u} ({\cal R}_i, {\cal R}_i) = {\cal U} _{i}, \label{optinputsame}
\end{align}

\subsection{Summary of the algorithm}
From \rsec{reachability_sec}, it is shown that by executing the trajectory generation and the tube generation, we can characterize the transition relation $\delta$ for all $(s_i, s_j) \in S \times S$ with $i\neq j$. Moreover, from \rsec{invariance_sec} we can check if $(s_i, s_i ) \in \delta$ for all $i\in\mathbb{N}_{1:N_I}$ by evaluating if the set ${\cal R}_i$ has an invariance property. Thus, we can characterize $\delta$ for all $(s_i, s_j) \in S \times S$, and, as a consequence, we can construct the transition system ${\cal T}$ as an abstraction of the control system in \req{sys}. 

\section{Implementation} \label{implementation}
Based on the transition system ${\cal T}$ given in the previous section, we now present an online,  controller/communication synthesis algorithm as a solution to \rpro{problem_formulation}. 
Following the hierarchical-based approach, the proposed algorithm consists of \textit{high} and \textit{low level planning}. 
In particular, we present control and communication strategies in the low level planner, by utilizing the generated tubes obtained in the previous section. 

\subsection{High level planning}
In the high level planning, the controller produces an infinite sequence of the regions of interest that the state should follow to satisfy the formula $\phi$. Since the reachability among the regions of interest can be captured by the transition system ${\cal T}$, this can be done by finding a path $s_{seq} \in (2^{S})^\omega$ of ${\cal T}$, such that ${\rm trace}(s_{seq}) \models \phi$ holds. Although there exist several methodologies to achieve this, this paper adopts an {automata-based model checking} algorithm; in this paper, we only describe the overview of the approach and refer the reader to Chapter~5 in \cite{baier} for a more detailed explanation. First, we construct a B{\"u}chi automaton associated with the formula $\phi$ by using the translation tools, such as LTL2BA \cite{ltl2ba}. Since the automaton can be viewed as a graph consisting nodes and edges, we can combine it with ${\cal T}$ and find an accepting path by implementing graph search methodologies. In particular, we construct a product automaton between the B{\"u}chi automaton and ${\cal T}$, and find the strongly connected components through the depth-first search method. Moreover, if mutiple accepting paths are found, we select \textit{shortest} one that provides the smallest number of symbols to express the prefix-suffix structure (for details, see e.g.,  \cite{belta2008a}). By doing this, it holds that once the two consecutive symbolic states are the same, it remains the same afterwards. That is, denoting by $s^* _{seq} = s^* _0 s^* _1 s^*_2 \cdots $ the (shortest) accepting path, it holds that 
\begin{equation}\label{belta_result}
s^* _i = s^* _{i+1},{\rm for\ some}\ i\in\mathbb{N} \implies s^* _j = s^* _i,\ \forall j \geq i
\end{equation}
(see Proposition~2 in \cite{belta2008a} as the equivalent result). As we will see later, this property is utilized to show that the trace of the accepting path ${\rm trace}(s^* _{seq})$ is consistent with the one generated by the state trajectory. 

Now, once the high level planner finds the accepting path $s^* _{seq} = s^* _0 s^* _1 \cdots$ as above, we can obtain the corresponding sequence of the regions of interest that is projected from $s^* _{seq}$, i.e., 
\begin{equation}\label{region_sequence}
{\cal R}^* _{seq} = {\cal R}^* _0{\cal R}^* _1{\cal R}^* _2 \cdots, 
\end{equation}
where ${\cal R}^* _i = \Gamma (s^* _i)$, $\forall i \in \mathbb{N}$. Note that since $s^* _0 = s_{init}$, we have ${\cal R}^* _{0} = {\cal R}_{init}$. As described above, ${\cal R}^* _{seq}$ represents the infinite sequence of the regions of interest that the state trajectory should traverse to satisfy $\phi$. More specific control and communication design to achieve this is given in the low-level planner, as described in the next subsection. 


\subsection{Low level planning}
In the low level planning, the controller designs a sequence of control inputs as well as the communication times when the plant transmits the state information to the controller. 
To this end, let ${\cal X}^* _0 {\cal X}^* _1 {\cal X}^* _2 \cdots$, ${\cal U}^* _0 {\cal U}^* _1 {\cal U}^* _2 \cdots$ denote the infinite sequence of the tubes and the control sets produced by
\begin{equation}\label{optimal_polytopes}
\underbrace{L_{{x}_0} ({\cal R}^* _0, {\cal R}^* _1)}_{{\cal X}^* _0\ \cdots \ {\cal X}^* _{L_1} }\  \underbrace{L_{x} ({\cal R}^* _1, {\cal R}^* _2)}_{{\cal X}^* _{L_1+1}\ \cdots \ {\cal X}^* _{L_2}}\ \underbrace{L_x ({\cal R}^* _2, {\cal R}^* _3)}_{{\cal X}^* _{L_2+1}\ \cdots \ {\cal X}^* _{L_3}} \cdots, 
\end{equation}
\begin{equation}\label{optimal_inputs}
\underbrace{L_{{u}_0} ({\cal R}^* _0, {\cal R}^* _1)}_{{\cal U}^* _0\ \cdots \ {\cal U}^* _{L_1} }\  \underbrace{L_{u} ({\cal R}^* _1, {\cal R}^* _2)}_{{\cal U}^* _{L_1+1}\ \cdots \ {\cal U}^* _{L_2}}\ \underbrace{L_u ({\cal R}^* _2, {\cal R}^* _3)}_{{\cal U}^* _{L_2+1}\ \cdots \ {\cal U}^* _{L_3}} \cdots, 
\end{equation}
where for the notational simplicity we denote 
$L_{x_0} ({\cal R}^* _0, {\cal R}^* _1) = {\cal X}^* _0\ {\cal X}^* _2 \cdots {\cal X}^* _{L_{1}}$, 
$L_{u_0} ({\cal R}^* _0, {\cal R}^* _1) = {\cal U}^* _0\ {\cal U}^* _2 \cdots {\cal U}^* _{L_{1}}$, 
and 
\begin{align}
L_{x}({\cal R}^* _i, {\cal R}^* _{i+1})  = {\cal X}^* _{L_i+1} \cdots {\cal X}^* _{L_{i+1}},\ \ \forall i \in \mathbb{N}_+, \\
L_{u}({\cal R}^* _i, {\cal R}^* _{i+1})  = {\cal U}^* _{L_i+1} \cdots {\cal U}^* _{L_{i+1}},\ \ \forall i \in \mathbb{N}_+, 
\end{align}
with indices $L_i\in \mathbb{N}$, $i\in \mathbb{N}_+$ appropriately chosen to line up the sequences: 
\begin{align}
{\cal X}^* _0 {\cal X}^* _1 {\cal X}^* _2 {\cal X}^* _3 \cdots, \ \
{\cal U}^* _0 {\cal U}^* _1 {\cal U}^* _2 {\cal U}^* _3\cdots . 
\end{align}
Note that if ${\cal R}^* _i \neq  {\cal R}^* _{i+1}$, the mappings $\{L_{x_0}, L_{x}, L_{u_0}, L_u\} ({\cal R}^* _i, {\cal R}^* _{i+1})$ are given according to \req{opttube}, \req{optinput}, i.e., the tubes ${\cal X}^* _{L_i+1} \cdots {\cal X}^* _{L_{i+1}}$ are obtained by solving \rpro{opt_problem}. If ${\cal R}^* _i =  {\cal R}^* _{i+1}$ they are given according to \req{opttubesame}, \req{optinputsame}, i.e., ${\cal X}^* _{L_i+1} = {\cal R}^* _i$ and $L_{i+1} = L_i+1$. 
From above, ${\cal X}^* _0 {\cal X}^* _1 {\cal X}^* _2 \cdots$ indicates an infinite sequence of tubes that the state should follow to satisfy the formula $\phi$. 
In other words, if the trajectory starting from $x_0 
 \in {\cal R}_{init} (\subseteq {\cal X}^* _0)$ is forced to remain in the tubes, i.e., $x_k \in {\cal X}^* _k$, $\forall k\in\mathbb{N}$, the trajectory passes through all regions of interest ${\cal R}^* _0 {\cal R}^* _1{\cal R}^* _2 \cdots $, and thus the satisfaction of $\phi$ is achieved. As we will see below, the control sequence ${\cal U}^* _0 {\cal U}^* _1 {\cal U}^*_2 \cdots$ is utilized to design the suitable control inputs to achieve this. Since ${\cal X}^* _{k}$ and ${\cal U}^* _{k}$ for each $k\in\mathbb{N}$ are represented as a convex hull 
\begin{align}
{\cal X}^* _{k} = {\rm co} \{ x^* _{1, k}, \ldots, x^* _{p_k , k} \}, \ 
{\cal U}^* _{k} = {\rm co} \{ u^* _{1, k}, \ldots, u^* _{p_k , k} \} 
\end{align}
for all $k \in \mathbb{N}$, where $p_k \in \mathbb{N}_+$ represents the number of vertices of ${\cal X}^* _k$. Note that $p_k$ may be time-varying since the choice of ${\cal Z}_{ij}$ in \req{xsets} depends on the pair $({\cal R}_i, {\cal R}_j)$. 
The proposed control and communication strategies are recursively given according to the following algorithm: 

\smallskip
\noindent
\textbf{Algorithm~1} (Low level control/communication strategy): 
\begin{enumerate}
\item Initialization: Set $k=0$. 
\item The plant transmits $x_k$ to the controller. 
\item \textit{(Compute control inputs)}: For a given $H \in \mathbb{N}_+$, the controller computes $u_k, u_{k+1}, \ldots, u_{k+H-1} \in {\cal U}$ according to the following procedure: set $\hat{x}_k = x_k$ and for all $\ell \in \mathbb{N}_{0: H-1}$, 
\begin{align}
u_{k+\ell} &= \sum^{p_{k+\ell}} _{s=1} \lambda_{s, k+\ell} u^* _{s, k+\ell} \label{com_input} \\
\hat{x}_{k+\ell+1} &= A \hat{x}_{k+\ell} + B u_{k+\ell}, \label{com_state}
\end{align}
where $\lambda_{s, k+\ell} \in [0, 1]$, $s \in \mathbb{N}_{1:p_{k+\ell}}$ are such that $\hat{x}_{k+\ell} = \sum^{p_{k+\ell}} _{s=1} \lambda_{s, k+\ell}x^* _{s, k+\ell}$ with $\sum^{p_{k+\ell}} _{s=1} \lambda_{s, k+\ell} = 1$. 
\smallskip
\item \textit{(Compute the next communication time)}: The controller computes the next communication time $k+\ell^* _k$, where $\ell^* _k \in \mathbb{N}_{0:H}$ is determined as 
\begin{align}
 \ell^* _k = \max \{ \ell' \in \mathbb{N}_{0:H} |  \hat{x}_{k+\ell} \oplus {\cal W}_{k+\ell} \subseteq {\cal X}^* _{k+\ell}, \forall \ell \in \mathbb{N}_{0:\ell'} \}, \notag \\ \label{nextcom}
\end{align}
where ${\cal W}_{k+\ell} = \sum^{\ell} _{j=1} A^{j-1} {\cal W}$. 
\item The controller transmits $u_k, \ldots , u_{k+\ell^* _k -1}$ to the plant. The plant applies the obtained control sequence, and it sends the new state information $x_{k+\ell^* _k}$ to the controller. 
\item Set $k\leftarrow k+ \ell^* _k$ and go back to step (2). \qedwhite
\end{enumerate}

As shown in the above algorithm, for each communication time the controller computes a sequence of control actions until the given prediction horizon $H$, and the next communication time when the plant should again transmit the state information to the controller. 
The control sequence is firstly obtained by using vertex interporations of the generated tubes according to \req{com_input}, \req{com_state}. Applying such control strategy is motivated by following property: 
\begin{mylem}\label{lem_control_seq}
Suppose that $x_k \in {\cal X}^* _k$ and the control sequence $u_{k}, u_{k+1}, \ldots, u_{k+H-1}$ is given according to \req{com_input}, \req{com_state}. Then, we have $\hat{x}_{k+\ell} \in {\cal X}^* _{k+\ell} \ominus {\cal W}$, $\forall \ell \in \mathbb{N}_{1: H}$. 
\end{mylem}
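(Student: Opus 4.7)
The plan is to prove the lemma by induction on $\ell \in \mathbb{N}_{0:H}$, with the stronger inductive statement that $\hat{x}_{k+\ell} \in {\cal X}^*_{k+\ell}$ for all $\ell \in \mathbb{N}_{0:H}$ and $\hat{x}_{k+\ell} \in {\cal X}^*_{k+\ell} \ominus {\cal W}$ for all $\ell \in \mathbb{N}_{1:H}$. The base case $\ell=0$ is immediate from the hypothesis $\hat{x}_k = x_k \in {\cal X}^*_k$.

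For the inductive step, assume $\hat{x}_{k+\ell} \in {\cal X}^*_{k+\ell}$. Since ${\cal X}^*_{k+\ell} = \mathrm{co}\{x^*_{1,k+\ell},\ldots,x^*_{p_{k+\ell},k+\ell}\}$, there exist $\lambda_{s,k+\ell} \in [0,1]$ with $\sum_s \lambda_{s,k+\ell} = 1$ such that $\hat{x}_{k+\ell} = \sum_{s=1}^{p_{k+\ell}} \lambda_{s,k+\ell} x^*_{s,k+\ell}$, which is precisely the decomposition used in \req{com_input}. Plugging \req{com_input} into \req{com_state} and exploiting the linearity of the dynamics, I would obtain
\begin{equation*}
\hat{x}_{k+\ell+1} = \sum_{s=1}^{p_{k+\ell}} \lambda_{s,k+\ell} \bigl( A x^*_{s,k+\ell} + B u^*_{s,k+\ell} \bigr).
\end{equation*}
By the tube-generation constraint \req{constraint} in \rpro{opt_problem} (or, for self-loop segments, the invariance property \req{invariance_property}), each summand $Ax^*_{s,k+\ell} + Bu^*_{s,k+\ell}$ lies in ${\cal X}^*_{k+\ell+1} \ominus {\cal W}$. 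Since the Pontryagin difference of two polytopes is convex, the convex combination $\hat{x}_{k+\ell+1}$ also lies in ${\cal X}^*_{k+\ell+1} \ominus {\cal W}$. Because $0 \in {\cal W}$, we have ${\cal X}^*_{k+\ell+1} \ominus {\cal W} \subseteq {\cal X}^*_{k+\ell+1}$, which closes the induction.

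The main obstacle I anticipate is handling the indices $k+\ell$ that cross a segment boundary (i.e., $k+\ell = L_i$ for some $i$), because the pair $({\cal X}^*_{k+\ell}, {\cal U}^*_{k+\ell})$ and $({\cal X}^*_{k+\ell+1}, {\cal U}^*_{k+\ell+1})$ may originate from two different invocations of \rpro{opt_problem} (one for the transition $({\cal R}^*_{i-1}, {\cal R}^*_i)$ and one for $({\cal R}^*_i, {\cal R}^*_{i+1})$). I would dispatch this by noting that \rthm{reachable_lem} guarantees ${\cal R}^*_i \subseteq {\cal X}^*_{L_i+1}$ at the start of the next segment while ${\cal X}^*_{L_i} \subseteq {\cal R}^*_i$, so the one-step inclusion $Ax^*_{s,L_i} + Bu^*_{s,L_i} \in {\cal X}^*_{L_i+1} \ominus {\cal W}$ still holds (either via the neighboring-tube constraint when the indexing conventions in \req{optimal_polytopes}--\req{optimal_inputs} identify these tubes, or via the invariance construction \req{invariance_property} in the self-loop case ${\cal R}^*_i = {\cal R}^*_{i+1}$). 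Once this boundary compatibility is verified, the induction goes through uniformly for every $\ell$.
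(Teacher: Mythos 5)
Your proof is correct and follows essentially the same route as the paper's: induction on $\ell$, writing $\hat{x}_{k+\ell}$ as a convex combination of tube vertices and invoking \req{constraint} (or \req{invariance_property} for self-loop segments), with the segment-boundary step handled exactly as in the paper's case~(ii) --- noting that the correct way to ``identify the tubes'' there is that $\hat{x}_{L_i} \in {\cal X}^*_{L_i} \subseteq {\cal R}^*_{i}$ lies in the \emph{initial} tube ${\cal X}_0 \supseteq {\cal R}^*_{i}$ of the next invocation of \rpro{opt_problem} (by (D.1)), whose vertex constraint then yields the one-step inclusion into ${\cal X}^*_{L_i+1} \ominus {\cal W}$, rather than (D.1) applying to ${\cal X}^*_{L_i+1}$ itself. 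The only additions beyond the paper's argument are your explicit appeals to convexity of the Pontryagin difference and to $0 \in {\cal W}$ to close the induction, both of which are correct and implicitly used by the paper as well.
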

\rlem{lem_control_seq} implies that by applying the control sequence according to \req{com_input}, all predictive states remain inside the corresponding tubes that are tightened by ${\cal W}$. For the proof, see Appendix. 


Once the controller computes the above control sequence, it proceeds to the computation of the next communication time $k+\ell^* _k$, where $\ell^* _k$ is given according to \req{nextcom}. As shown in the algorithm, the communication is given in a self-triggered manner as the controller iteratively computes the next communication time for each update time. The following lemma states that the next communication time is given such that the \textit{actual states} are guaranteed to remain inside the tubes: 
\begin{mylem}\label{lem_nextcom}
Suppose that $x_k \in {\cal X}^* _k$ and the control sequence $u_{k}, u_{k+1}, \ldots, u_{k + \ell^* _k-1}$ is applied at the plant for all $k, k+1, \ldots, k+\ell^* _k -1$. 
Then, for all $\ell \in \mathbb{N}_{1:\ell^* _k}$, the actual states in accordance with \req{sys} satisfies $x_{k+\ell} \in {\cal X}^* _{k+\ell}$, $\forall w_{k}, w_{k+1}, \ldots, w_{k+\ell-1} \in {\cal W}$. 
\end{mylem}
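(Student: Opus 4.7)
The plan is to track the deviation between the actual disturbed state $x_{k+\ell}$ and the nominal predicted state $\hat{x}_{k+\ell}$ computed via \req{com_state}, show that this deviation lies in the accumulated disturbance set ${\cal W}_{k+\ell}$, and then invoke the definition of $\ell^*_k$ in \req{nextcom} to conclude $x_{k+\ell} \in {\cal X}^*_{k+\ell}$.

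First I would introduce the error signal $e_{k+\ell} = x_{k+\ell} - \hat{x}_{k+\ell}$ for $\ell \in \mathbb{N}_{0:\ell^*_k}$. Since the controller sends the same open-loop control actions $u_{k}, u_{k+1}, \ldots, u_{k+\ell^*_k -1}$ to the plant, subtracting \req{com_state} from \req{sys} yields the recursion $e_{k+\ell+1} = A e_{k+\ell} + w_{k+\ell}$. Together with the initialization $e_k = x_k - \hat{x}_k = 0$ (as the plant transmits the true state $x_k$ and the controller sets $\hat{x}_k = x_k$ in step~3 of Algorithm~1), unrolling this recursion gives
\begin{equation}
e_{k+\ell} = \sum_{j=1}^{\ell} A^{j-1} w_{k+\ell-j}, \quad \forall \ell \in \mathbb{N}_{1:\ell^*_k}.
\end{equation}

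Next, since $w_{k+\ell-j} \in {\cal W}$ for every $j$, the above expression implies $e_{k+\ell} \in \bigoplus_{j=1}^{\ell} A^{j-1} {\cal W} = {\cal W}_{k+\ell}$ by definition of ${\cal W}_{k+\ell}$. Consequently, the actual state satisfies $x_{k+\ell} = \hat{x}_{k+\ell} + e_{k+\ell} \in \hat{x}_{k+\ell} \oplus {\cal W}_{k+\ell}$ for any admissible disturbance sequence.

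Finally I invoke the very construction of $\ell^*_k$ in \req{nextcom}: this index is defined as the largest $\ell' \in \mathbb{N}_{0:H}$ such that $\hat{x}_{k+\ell} \oplus {\cal W}_{k+\ell} \subseteq {\cal X}^*_{k+\ell}$ for all $\ell \in \mathbb{N}_{0:\ell'}$. Hence for every $\ell \in \mathbb{N}_{1:\ell^*_k}$ we have $x_{k+\ell} \in \hat{x}_{k+\ell} \oplus {\cal W}_{k+\ell} \subseteq {\cal X}^*_{k+\ell}$, which is the claim. The argument has no real obstacle; the only subtlety worth highlighting is that the initial condition $e_k = 0$ relies on the plant actually transmitting $x_k$ at step~(2), so that $\hat{x}_k$ is seeded with the true state rather than with a previously predicted value — otherwise the error would accumulate across communication intervals and the pointwise-in-$\ell$ containment \req{nextcom} would no longer suffice.
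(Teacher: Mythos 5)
Your proof is correct and follows essentially the same route as the paper: you both express the actual state as the nominal prediction $\hat{x}_{k+\ell}$ perturbed by the accumulated disturbance $\sum_{j=1}^{\ell}A^{j-1}w_{k+\ell-j}\in{\cal W}_{k+\ell}$ (you via an error recursion, the paper by directly unrolling the dynamics) and then invoke the containment $\hat{x}_{k+\ell}\oplus{\cal W}_{k+\ell}\subseteq{\cal X}^*_{k+\ell}$ built into the definition of $\ell^*_k$ in \req{nextcom}. Your remark that $e_k=0$ hinges on the plant transmitting the true state $x_k$ is a worthwhile clarification that the paper leaves implicit.
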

\begin{proof}
First, observe that the actual state at $k+\ell$, $\ell \in \mathbb{N}_{1: \ell^* _k}$ by applying $u_k, \ldots, u_{k+\ell-1}$ is given by 
\begin{align}
x_{k+\ell} &= A^{\ell} x_k + \sum^{\ell-1} _{j=0} A^{j} B u_{k+j} + \sum^{\ell-1} _{j=0} A^{j} w_{k+j} \notag \\
             & = \hat{x}_{k+\ell} + \sum^{\ell} _{j=1} A^{j-1} w_{k+\ell}. \label{actual_state}
\end{align}
That is, the set $\hat{x}_{k+\ell} \oplus {\cal W}_{k+\ell}$, ${\cal W}_{k+\ell} = \sum^{\ell} _{j=1} A^{j-1} {\cal W}$ indicates the set of \textit{all} actual states that can be reached at $k+\ell$ under the influence of disturbances. From \req{nextcom} it follows that $\hat{x}_{k+\ell} \oplus {\cal W}_{k+\ell} \subseteq {\cal X}^* _{k+\ell}$, $\forall \ell \in \mathbb{N}_{1:\ell^* _k}$, which means from \req{actual_state} that $x_{k+\ell} \in {\cal X}^* _{k+\ell}$, $\forall w_{k}, w_{k+1}, \ldots, w_{k+\ell -1} \in {\cal W}$, which holds for all $\ell \in \mathbb{N}_{1:\ell^* _k}$. Thus the proof is complete. 
\end{proof}

From the proof of \rlem{lem_nextcom}, we can ensure that $x_{k+\ell} \in {\cal X}^* _{k+\ell} $ for all $\ell \in \mathbb{N}_{1: \ell^* _k}$, meaning that applying the control sequence $u_k, \ldots, u_{k+\ell^* _k-1}$ in an open-loop fashion will not violate the formula $\phi$. 
For the remaining time steps $k+\ell$ with $\ell \in \mathbb{N}_{\ell^* _k + 1 : H}$, however, it is no longer guaranteed that we have $\hat{x} _{k+\ell} \oplus {\cal W}_{k+\ell} \subseteq {\cal X}^* _{k+\ell}$. This means that the actual states $x_{k+\ell}$ for $\ell \in \mathbb{N}_{\ell^* _k + 1 : H}$ are possible to leave the set ${\cal X}^* _{k+\ell}$, leading to the violation of $\phi$. Thus, $k+\ell^* _k$ represents the maximal future time step that the actual states are guaranteed to remain in the tubes to satisfy $\phi$, as well as that the next state measurement should be communicated to update the control inputs.

\begin{myrem}[On the positive inter-transmission times]\label{positive_transmission_rem}
From \rlem{lem_control_seq} and ${\cal W}_1 = {\cal W}$, it follows that $x_k \in {\cal X}_k \implies \hat{x}_{k+\ell} \oplus {\cal W}_1 \subseteq {\cal X}^* _{k+\ell}$. 
Thus, $\hat{x}_{k+\ell} \oplus {\cal W}_\ell \subseteq {\cal X}^* _{k+\ell}$ for all $\ell = 0, 1$, and, therefore, it holds that $\ell^* _k \geq 1$ (i.e., the inter-transmission time step is always positive). This property is important, since if we \textit{were} to obtain $\ell^* _k = 0$, the controller would not provide a suitable next communication time according to \req{nextcom}. Note that while $\ell^* _k \geq 1$ is guaranteed, we cannot guarantee $\ell^* _k \geq 2$. This is because we have ${\cal W}_\ell \supseteq {\cal W}$ for all $\ell \geq 2$ and thus we cannot ensure that the condition $\hat{x} (k+\ell) \oplus {\cal W}_\ell \subseteq {\cal X}^* (k+\ell)$ holds.  
\end{myrem}

\subsection{Summary of the implementation}
Since the low level planner appropriately computes control inputs and communication times so that the state trajectory follows the generated tube sequence, we obtain the following result: 
\begin{mythm}
Suppose that for given $x_0 \in {\cal R}_{init}$ and LTL formula $\phi$, the high level controller finds the sequence of the regions of interest as in \req{region_sequence}. Moreover, suppose that the low level planner (Algorithm~1) is implemented. 
Then, the resulting state trajectory ${\bf x} = x_0 x_1 x_2 \cdots $ satisfies $\phi$, i.e., ${\rm trace} ({\bf x}) \models \phi$. This holds for all $w_k \in {\cal W}$, $\forall k \in \mathbb{N}$. 
\end{mythm}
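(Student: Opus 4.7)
The plan is to establish two claims in sequence: (i) the closed-loop trajectory generated by Algorithm~1 satisfies the containment $x_k \in {\cal X}^* _k$ for every $k \in \mathbb{N}$; and (ii) the trace generated by such a trajectory, according to \rdef{trace_traj}, coincides with ${\rm trace}(s^* _{seq})$, so that satisfaction of $\phi$ is inherited from the high-level planner via \rdef{satisfaction_formula}.

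First I would prove (i) by induction on the sequence of communication instants $0 = k_0 < k_1 < k_2 < \cdots$ produced by Algorithm~1. The base case $x_{k_0} = x_0 \in {\cal R}_{init} \subseteq {\cal X}^* _0$ follows either from condition (D.1) of \rthm{reachable_lem} applied to the first reachability segment (when ${\cal R}^* _0 \neq {\cal R}^* _1$) or directly from \req{opttubesame} (when ${\cal R}^* _0 = {\cal R}^* _1$, in which case ${\cal X}^* _0 = {\cal R}^* _0$). For the inductive step, assuming $x_{k_j} \in {\cal X}^* _{k_j}$, \rlem{lem_nextcom} yields $x_{k_j+\ell} \in {\cal X}^* _{k_j+\ell}$ for all $\ell \in \mathbb{N}_{1:\ell^* _{k_j}}$ and every admissible disturbance realization; in particular $x_{k_{j+1}} \in {\cal X}^* _{k_{j+1}}$ with $k_{j+1} = k_j + \ell^* _{k_j}$. \rrem{positive_transmission_rem} guarantees $\ell^* _{k_j} \geq 1$, so $k_{j+1} > k_j$ and the induction covers every $k \in \mathbb{N}$.

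Next I would translate this containment into the correct trace. From \req{optimal_polytopes}, every non-trivial block ${\cal X}^* _{L_i+1} \cdots {\cal X}^* _{L_{i+1}}$ arises from a solution of \rpro{opt_problem} certifying reachability from ${\cal R}^* _i$ to ${\cal R}^* _{i+1}$, while a self-loop block collapses to ${\cal R}^* _i$ by \req{opttubesame}. Reasoning as in the proof of \rthm{reachable_lem}, any sub-trajectory satisfying $x_k \in {\cal X}^* _k$ on a non-trivial block fulfills (C.1)-(C.3) of \rdef{reachable_def}, so \rprop{trace_prop} guarantees that its contribution to the trace of \rdef{trace_traj} is exactly the two-symbol word $g(s^* _i) g(s^* _{i+1})$, with no spurious intermediate or trailing symbol. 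On a self-loop block the state stays in ${\cal R}^* _i$ and contributes only additional copies of $g(s^* _i)$.

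Finally I would concatenate these per-block contributions through the rules of \rdef{trace_traj}, using \req{belta_result} to control the tail: once the accepting path repeats a symbol it repeats forever, so Rule~3 of \rdef{trace_traj} collapses the suffix into a constant $g(s^* _i)$, which matches the suffix of ${\rm trace}(s^* _{seq})$. Rule~2 handles the boundaries between successive reachability blocks, where the terminal symbol $g(s^* _{i+1})$ of one block coincides with the initial symbol of the next and is therefore not double-counted. The outcome is ${\rm trace}({\bf x}) = {\rm trace}(s^* _{seq}) \models \phi$, giving ${\bf x} \models \phi$ by \rdef{satisfaction_formula}. I expect the main delicate step to be precisely this gluing argument: verifying carefully that the piecewise application of \rprop{trace_prop}, combined with \req{belta_result} and the collapsing conventions of \rdef{trace_traj}, reproduces exactly ${\rm trace}(s^* _{seq})$ without inserting or dropping any symbol at a block boundary.
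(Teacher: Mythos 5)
Your proposal is correct and follows essentially the same route as the paper's proof: induction via \rlem{lem_nextcom} to get $x_k \in {\cal X}^*_k$ for all $k$, then \rprop{trace_prop} on each reachability block and \req{belta_result} together with rule (3) of \rdef{trace_traj} for the self-loop suffix, yielding ${\rm trace}({\bf x}) = {\rm trace}(s^*_{seq}) \models \phi$. The only cosmetic difference is that the paper organizes the trace argument as an explicit two-case split (no self-loop ever vs.\ a self-loop occurring at some index) rather than your per-block gluing description, and your treatment of the base case and of $\ell^*_k \geq 1$ is if anything slightly more explicit than the paper's.
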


\begin{proof}
From \rlem{lem_nextcom}, it holds that $x_k \in {\cal X}^* _k \implies x_{k+\ell} \in {\cal X}^* _{k+\ell}, \forall \ell \in \mathbb{N}_{1:\ell^* _k}$. Thus, by induction, we obtain $x_k \in {\cal X}^* _k$, $\forall k\in\mathbb{N}$ with $x_0 \in {\cal R}_{init} \subseteq {\cal X}^* _0$. From \req{optimal_polytopes} the actual state traverses all the regions of interest ${\cal R}^* _{seq} = {\cal R}^* _0 {\cal R}^* _1 {\cal R}^* _2 \cdots $. To prove the theorem, we will show that ${\rm trace} ({\bf x}) = {\rm trace} (s^* _{seq})$. 
To this end, consider the following two cases: (i) it holds that ${\cal R}^* _i \neq {\cal R}^* _{i+1}$, $\forall i \in \mathbb{N}$;  (ii) there exists $i \in \mathbb{N}$ such that ${\cal R}^* _j \neq {\cal R}^* _{j+1}$, $\forall j \in \mathbb{N}_{0:i-1}$ and ${\cal R}^* _i = {\cal R}^* _{i+1}$. For the case (i), each tube sequence from ${\cal R}^* _i$ to ${\cal R}^* _{i+1}$ ($i\in \mathbb{N}$) has been obtained by solving \rpro{opt_problem}, and the resulting state trajectory satisfies (C.1)-(C.3) in \rdef{reachable_def}. In other words, the trace of the state trajectory while moving from ${\cal R}^* _i$ to ${\cal R}^* _{i+1}$ is $g(s^* _i) g(s^* _{i+1})$ for all $i \in \mathbb{N}$, which is consistent with the trace from $s^* _i$ to $s^* _{i+1}$ (see \rprop{trace_prop}). Thus, we obtain ${\rm trace} ({\bf x}) = {\rm trace} (s^* _{seq}) \models \phi$ and so the satisfaction of $\phi$ is guaranteed. Now, consider the case (ii). From above, the trace of the state trajectory until it reaches ${\cal R}^* _i$ ($x_k , k\leq L_i$) is $g(s^* _0) g(s^* _1) \cdots g(s^* _i)$, which is consistent with the trace of the path $s^* _0 s^* _1 \cdots s^* _i$. To illustrate the consistency after $i$, observe that we have ${\cal R}^* _j = {\cal R}^* _i$, $\forall j \geq i$ from \req{belta_result}. This implies that once the state enters ${\cal R}^* _i$ it remains there for all time, i.e., $x_k \in {\cal R}^* _i$, $\forall k \geq L_i + 1$. Thus, by employing the rule (3) in \rdef{trace_traj} the trace of the state trajectory for $x_k$, $k \geq L_i + 1$ is $g(s^* _i) g(s^* _{i+1}) \cdots $ with ${s}^* _j = {s}^* _i$, $\forall j \geq i$. As a consequence, the resulting state trajectory ${\bf x} = x_0 x_1 x_2 \cdots$ satisfies ${\rm trace} ({\bf x}) = {\rm trace} (s^* _{seq}) \models \phi$. Therefore, regardless of the case (i) and (ii), it holds that ${\rm trace} ({\bf x}) = {\rm trace} (s^* _{seq}) \models \phi$. The proof is complete. 
\end{proof}



\section{Illustrative example} 
Consider the following system (see \cite{belta2008a}): 
\begin{equation} \label{sim_system}
\dot{{x}}  =  \left [
\begin{array}{cc}
0.2  &  -0.3   \\
0.5 &  -0.5
\end{array}
\right ] x + \left [
\begin{array}{cc}
1 & 0 \\
0 & 1
\end{array}
\right ] u + w, 
\end{equation}
where $x \in \mathbb{R}^2$ is the state, $u \in \mathbb{R}^2$ is the control input, and $w \in \mathbb{R}^2$ is the disturbance. We discretize \req{sim_system} under a $0.05$ sample-and-hold controller to obtain the corresponding discrete-time system in \req{sys}. We assume $x_{0} = [-4; -4] \in {\cal R}_2$ and the control and disturbance size set in \req{uwsets} are given by ${\cal U} = \{ u_k \in  {\mathbb{R}} \ |\ ||u_k||_{\infty} \leq 6.0 \}$ and ${\cal W} = \{ w_k \in  {\mathbb{R}}^{2}\ |\ ||w_k|| \leq 0.15\}$. The set ${\cal X} \subset \mathbb{R}^2$ is shown in \rfig{state_space_result}. In the figure, the white regions represent the free space in which the state can move freely, and the black regions represent obstacles to be avoided. As shown in \rfig{state_space_result} there exist 4 regions of interest ${\cal R}_1, {\cal R}_2, {\cal R}_3, {\cal R}_4$, which are all $1\times 1$ squares. 


For example, consider reachability from ${\cal R}_2$ to ${\cal R}_3$. In the simulation, the nominal trajectory is first generated by implementing standard RRT algorithm \cite{lavalle1999}. The tubes are generated around the nominal trajectory by solving \rpro{opt_problem} and is illustrated in \rfig{state_tube_result2} as the sequence of blue regions. When solving \rpro{opt_problem}, we assume ${\cal Z} = \{z \in \mathbb{R}^2\ |\ ||z||_\infty \leq 1 \}$. The figure illustrates that the initial tube includes ${\cal R}_2$, as well as that once the tube intersects ${\cal R}_3$ it does not intersect ${\cal R}_2$ afterwards. Therefore, it is shown from \rthm{reachable_lem} that reachability holds from ${\cal R}_2$ to ${\cal R}_3$. The generated tubes from ${\cal R}_3$ to ${\cal R}_4$ and from ${\cal R}_4$ to ${\cal R}_1$ are also illustrated in \rfig{state_tube_result3} and \rfig{state_tube_result4}, respectively, and we can also show the reachability for these regions. 
Similarly, we analyze reachability for all the other pairs of the regions of interest and construct the transition system. The resulting transition system ${\cal T}$ contains 4 symbolic states and 16 transitions. The total time to construct ${\cal T}$ is 950s on Windows 10, Intel(R) Core(TM) 2.40GHz, 8GB RAM. 

\begin{figure}[t]
   \centering
    \subfigure[Free space ${\cal X}$]{
      {\includegraphics[width=4.2cm]{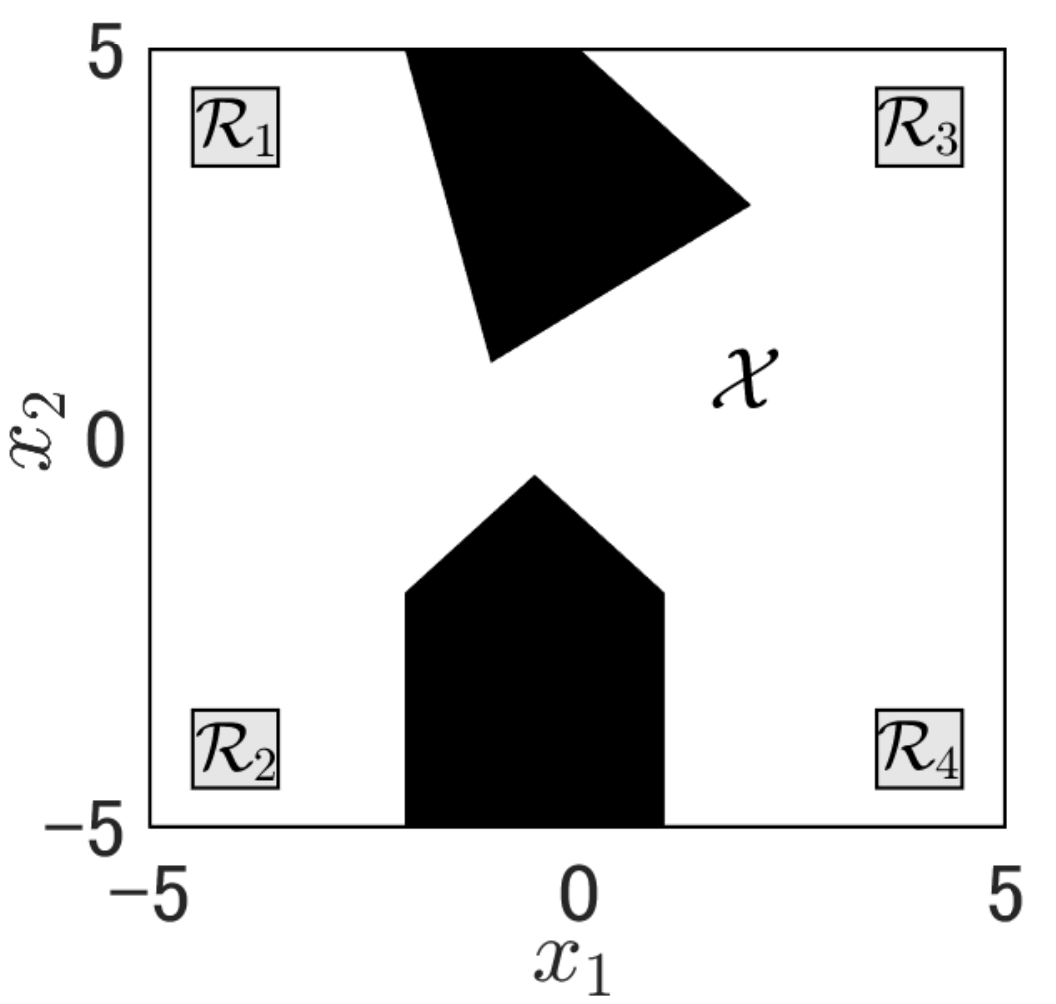}} \label{state_space_result}}
      \hspace{-0.58cm}
   \subfigure[Tubes from ${\cal R}_2$ to ${\cal R}_3$]{
      {\includegraphics[width=4.2cm]{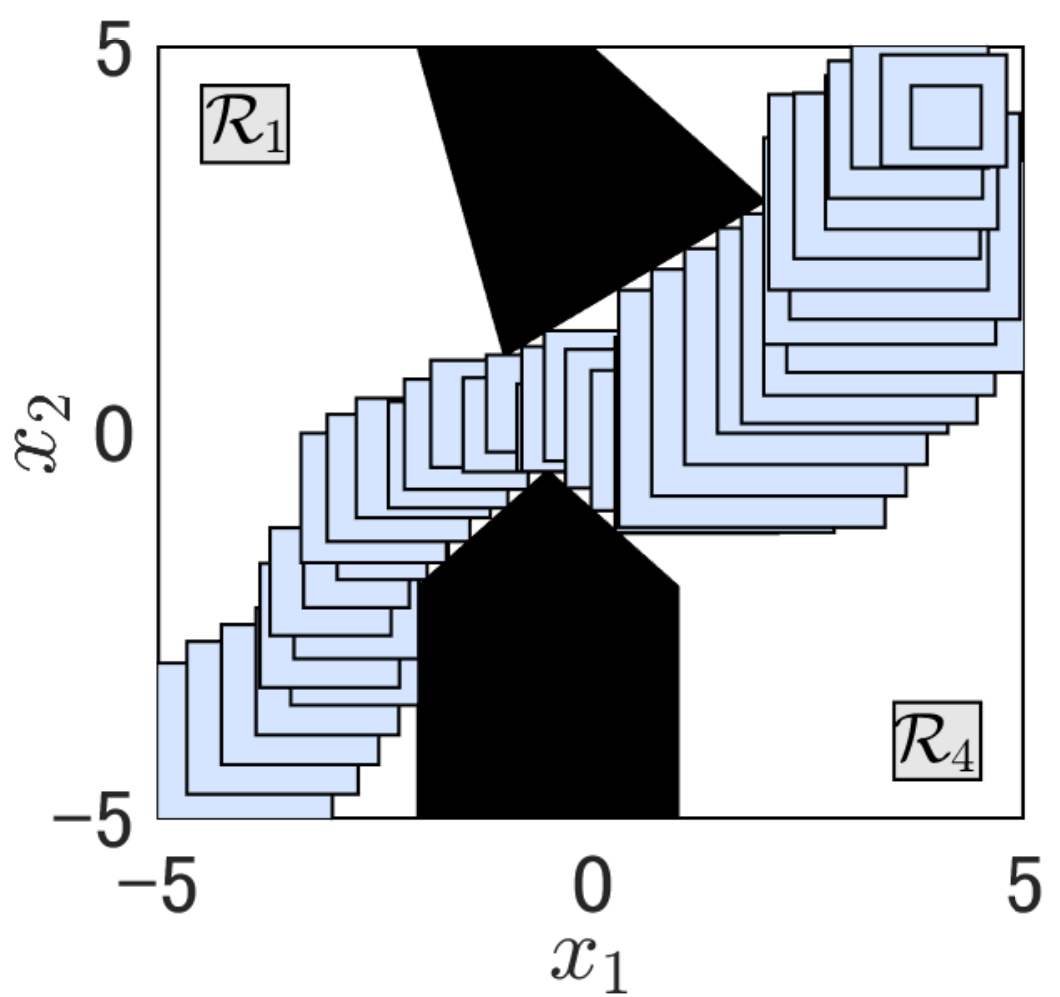}} \label{state_tube_result2}}
   \subfigure[Tubes from ${\cal R}_3$ to ${\cal R}_4$]{
      {\includegraphics[width=4.2cm]{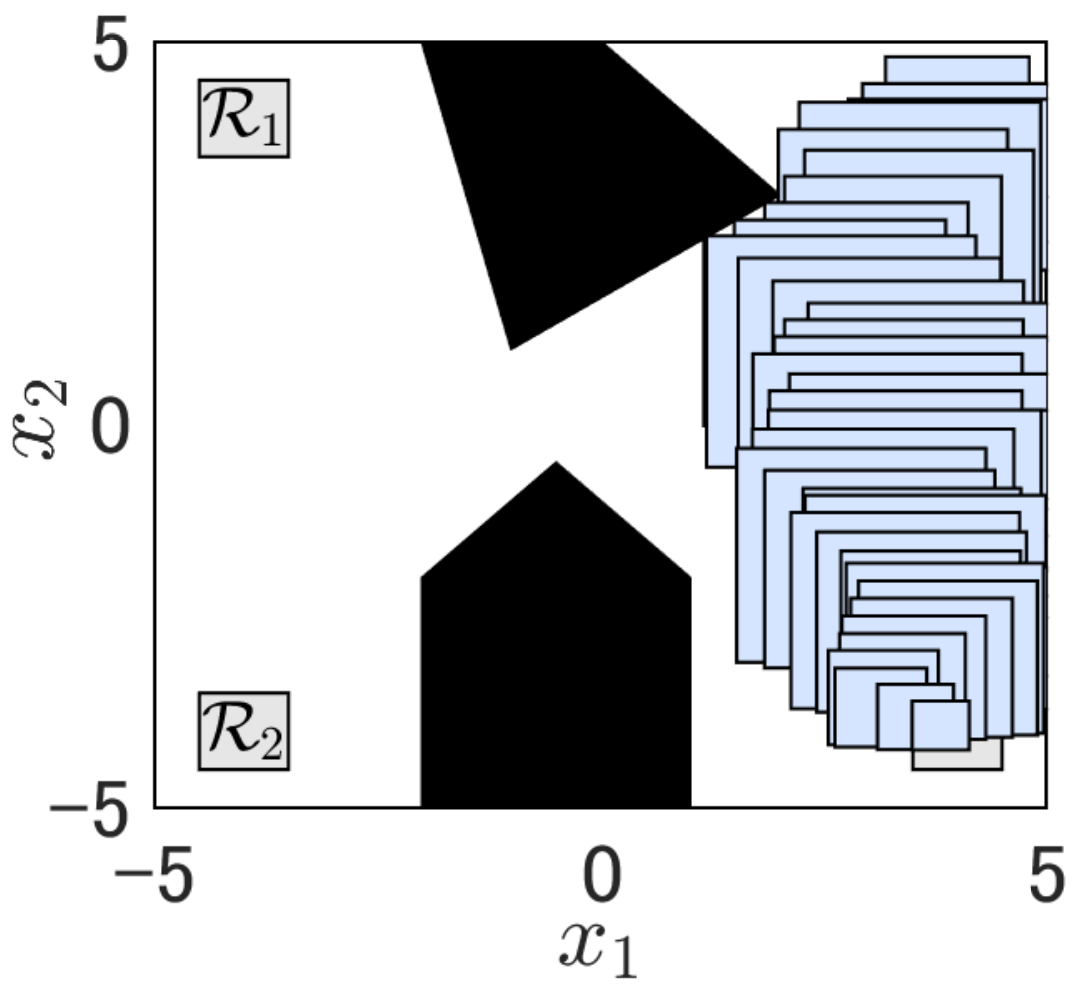}} \label{state_tube_result3}}
      \hspace{-0.55cm}
   \subfigure[Tubes from ${\cal R}_4$ to ${\cal R}_1$]{
      {\includegraphics[width=4.2cm]{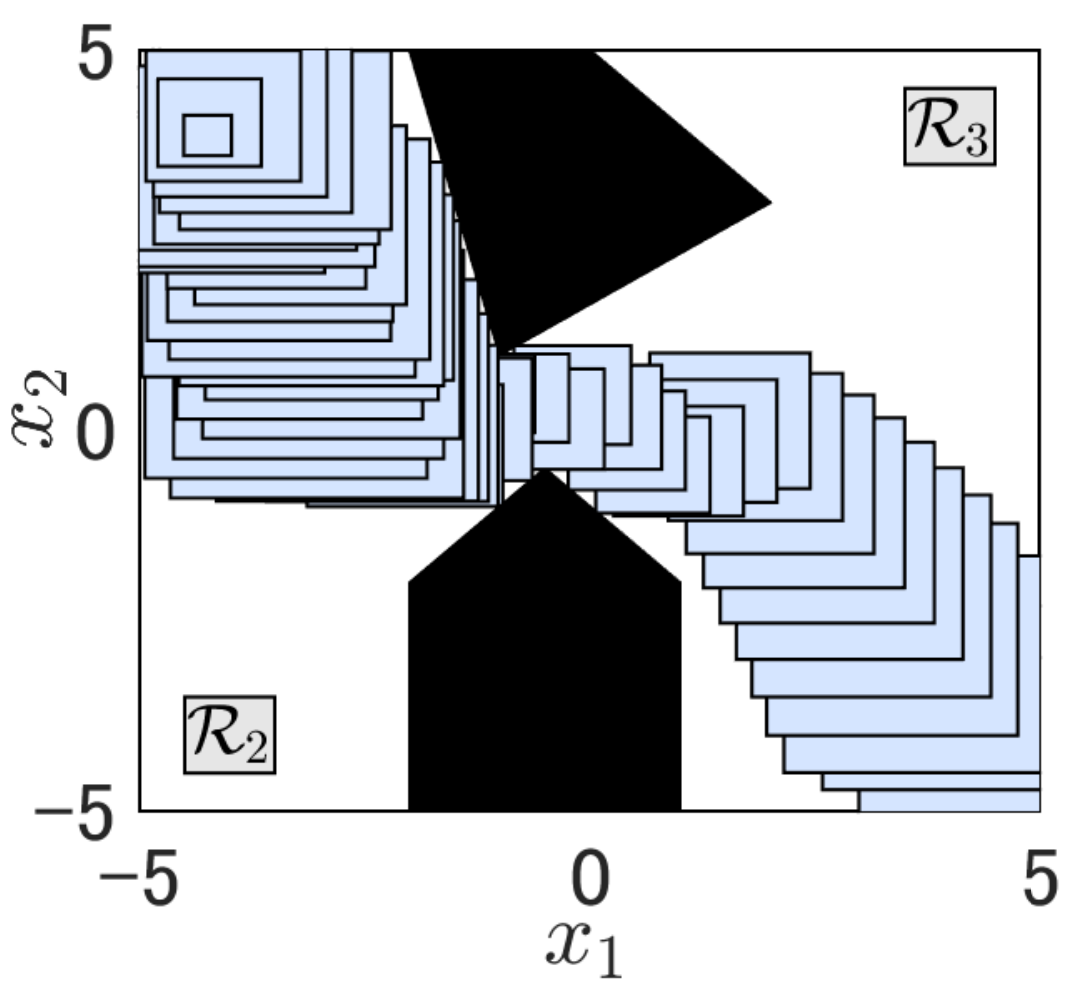}} \label{state_tube_result4}}
    \caption{Illustration of ${\cal X}$ and the generated tubes by solving \rpro{opt_problem}. }
 \end{figure}
 
 \begin{figure}[t]
   \centering
    \subfigure[$\phi = \phi_1$ (${k\in[0, 40]}$)]{
      {\includegraphics[width=4.3cm]{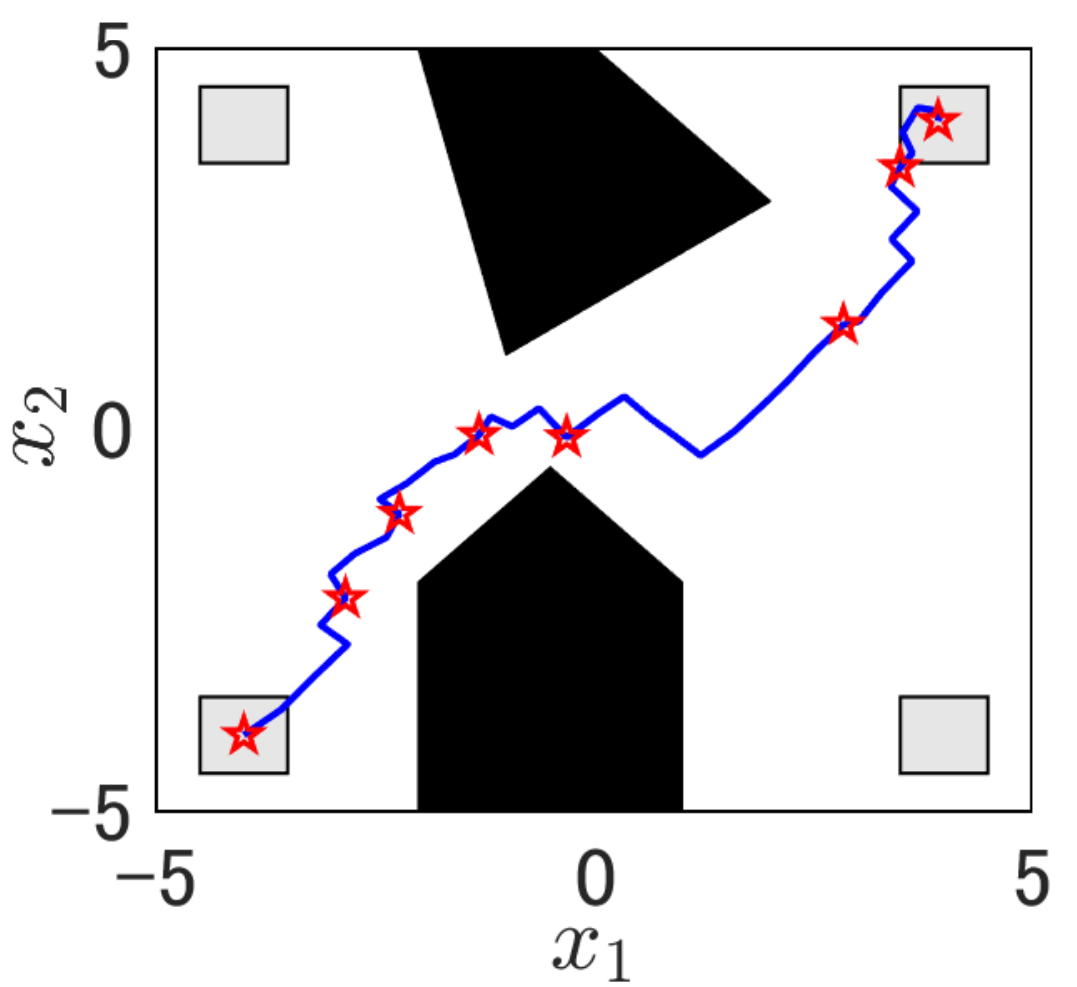}} \label{state_traj_result2}}
    \hspace{-0.5cm}
    \subfigure[$\phi = \phi_1$ (${k\in[0, 800]}$)]{
      {\includegraphics[width=4.3cm]{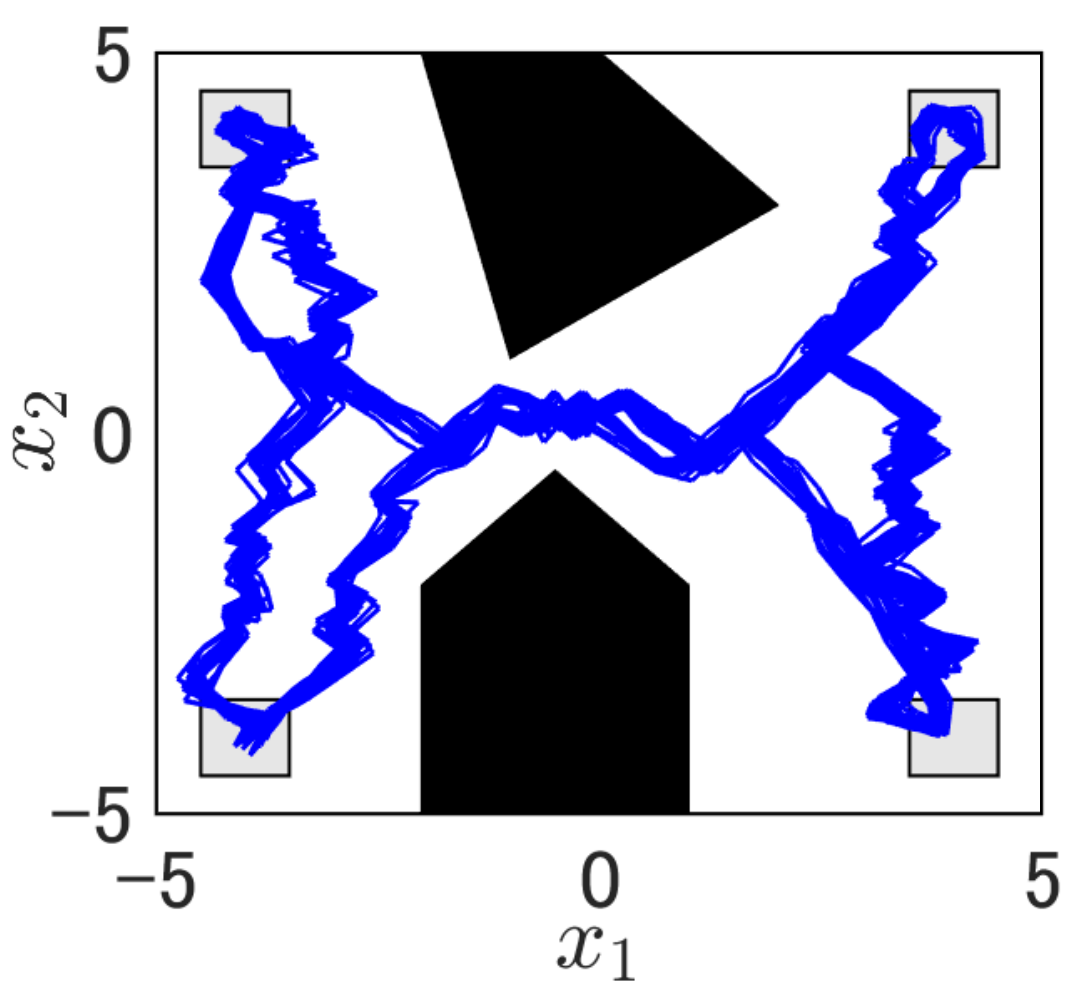}} \label{state_traj_result1}}
      \hspace{-0.5cm}
     \subfigure[$\phi = \phi_2$ (${k\in[0, 800]}$)]{
      {\includegraphics[width=4.2cm]{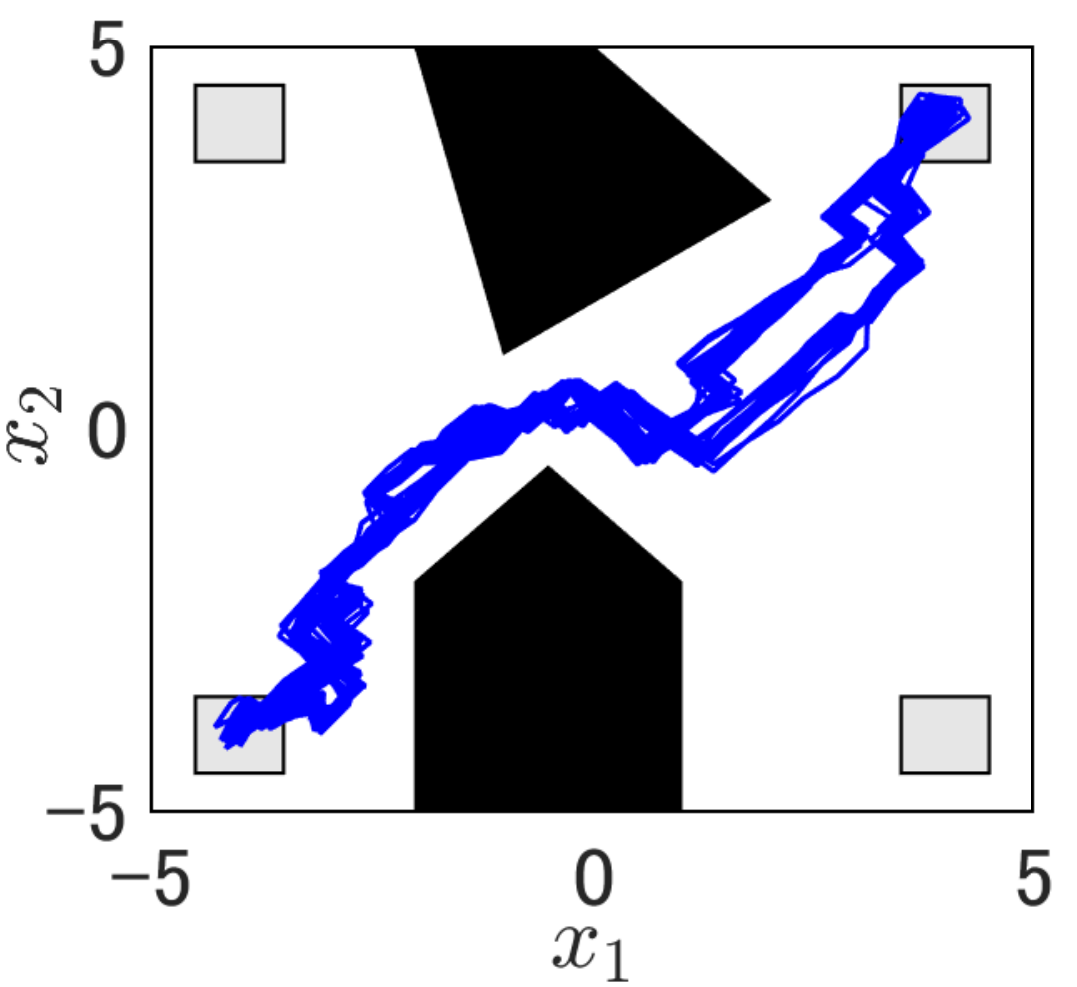}} \label{state_traj_result3}}
      \hspace{-0.5cm}
     \subfigure[$\phi = \phi_3$ (${k\in[0, 800]}$)]{
      {\includegraphics[width=4.3cm]{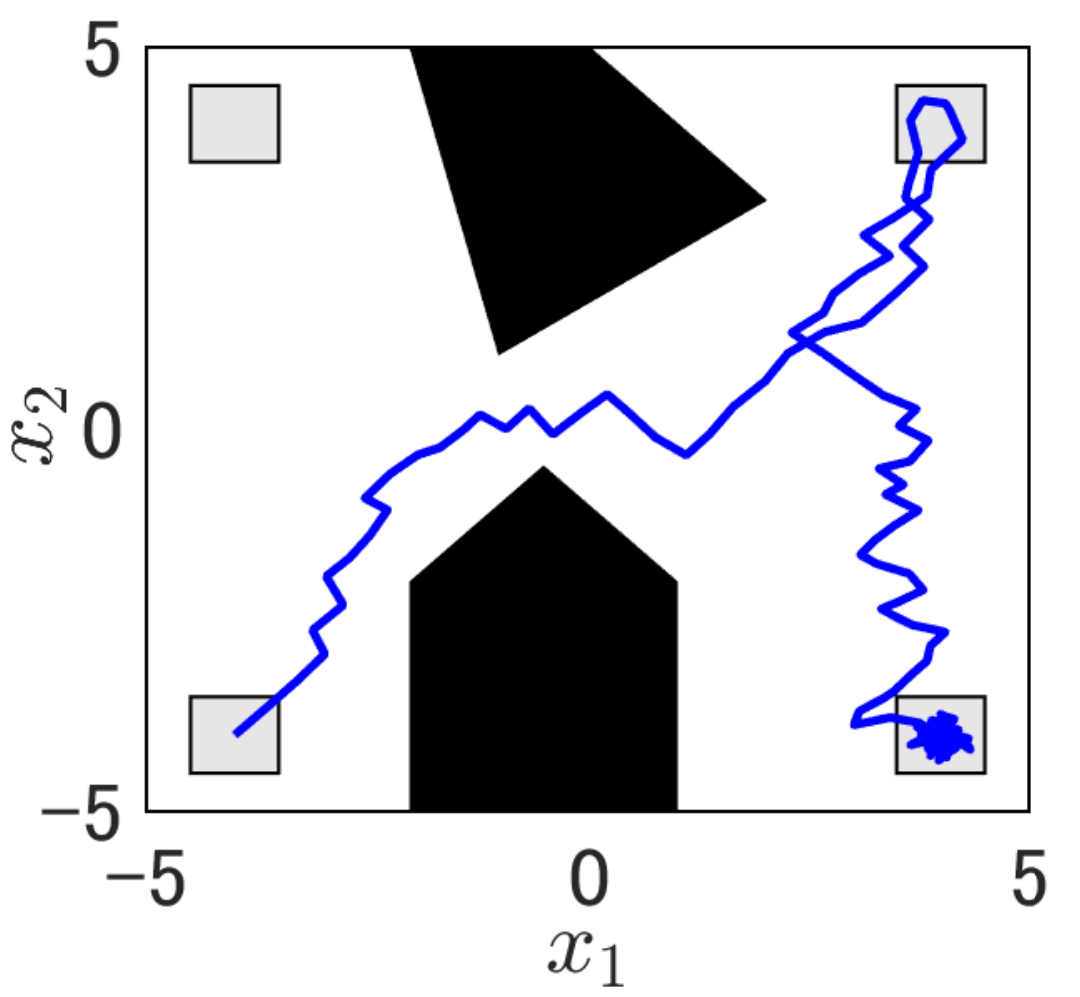}} \label{state_traj_result4}}
      \caption{State trajectories by applying Algorithm~1 with $\phi = \phi_1$ and $\phi = \phi_2$. }
    \label{state_traj_result}
\end{figure}


To illustrate the proposal, we consider the following three specifications: $\phi_1 = \Box ( \Diamond \pi_1 \wedge \Diamond \pi_2 \wedge \Diamond \pi_3 \wedge \Diamond \pi_4)$ (visit all regions infinitely often), $\phi_2 = \Box ( \Diamond \pi_2 \wedge \Diamond \pi_3)$ (visit ${\cal R}_2$ and ${\cal R}_3$ infinitely often), and $\phi_3 = \Diamond \pi_3 \wedge \Diamond \Box \pi_4$ (visit ${\cal R}_3$ and ${\cal R}_4$ and stay in ${\cal R}_4$ for all time). \rfig{state_traj_result} illustrates the resulting state trajectories (blue lines) by implementing Algorithm~1 with $\phi = \phi_1$ for $k\in[0, 40]$ (\rfig{state_traj_result2}) and $k\in[0, 800]$ (\rfig{state_traj_result1}). During the implementation, we set the prediction horizon as $H=10$. In \rfig{state_traj_result2}, red star marks represent the communication instants when control inputs are updated. The number of transmission time instants is $8$ during $k\in[0, 40]$, which indicates that the communication instants are 5 times smaller than the number of time steps. It can be seen from the figure that the transmission frequency becomes high when the state is close to the obstables and also to ${\cal R}_3$. This is possibly because the generated tubes tend to have small volumes (\rfig{state_tube_result2}) when the nominal trajectory is close to the obstacles and ${\cal R}_3$, and so the corresponding triggering conditions in \req{nextcom} become less likely to be satisfied. From \rfig{state_traj_result1}, it is shown that states are appropriately steered to satisfy the formula $\phi_1$ by surveiling all regions of interest. 
Similarly, \rfig{state_traj_result3} and \rfig{state_traj_result4} illustrate the state trajectories by applying Algorithm~1 with $\phi = \phi_2$ and $\phi = \phi_3$, respectively. From the figure, it is also shown that states are appropriately steered to satisfy the two formulas. 

The number of transmission instants during $k \in [0, 800]$ is given by $121$ ($\phi = \phi_1$), $141$ ($\phi = \phi_2$) and $190$ ($\phi = \phi_3$), while the periodic communication ($\ell^* _k = 1 , \forall k\in\mathbb{N}$) is $801$. In summary, it achieves less communication load than the periodic scheme, and the effectiveness of the proposed approach is validated. 


\section{Conclusion}
In this paper, we propose a self-triggered strategy for achieving both communication reduction for NCSs and the satisfaction of the temporal logic formulas. 
We start by constructing a finite transition system based on tube-based strategy. The generated tubes are utilized as a guidance to achieve the satisfaction of the temporal logic formula, as well as to design the communication strategy to achieve the communication reduction for NCSs. Finally, we illustrate the benefits of the proposed approach through a numerical simulation.


\balance

\appendix 
\noindent
\textbf{(Proof of \rlem{lem_control_seq})}:  
Suppose that $\hat{x} _{k+\ell} \in {\cal X}^* _{k+\ell}$, $\ell \in \mathbb{N}_{1:H-1}$ and ${\cal X}^* _{k+\ell} \in L_{x} ({\cal R}^* _i, {\cal R}^* _{i+1})$ (or $L_{x_0} ({\cal R}^* _i, {\cal R}^* _{i+1}))$ for some $i$. This means that ${\cal X}^* _{k+\ell}$ is one of the tubes generated by analyzing reachability from ${\cal R}^*  _i$ to ${\cal R}^* _{i+1}$. For the next tube ${\cal X}^* _{k+\ell+1}$, either the following case can be considered: 
(i) ${\cal X}^* _{k+\ell+1} \in L_{x} ({\cal R}^* _i, {\cal R}^* _{i+1})$; 
(ii) ${\cal X}^* _{k+\ell+1} \in L_{x} ({\cal R}^* _{i+1}, {\cal R}^* _{i+2})$. For the case (i), ${\cal X}^* _{k+\ell+1}$ is generated by analyzing the reachability for the same pairs of the regions of interest as ${\cal X}^* _{k+\ell}$. 
Since $L_{x} ({\cal R}^* _{i}, {\cal R}^* _{i+1})$ is a set of tubes obtained by solving \rpro{opt_problem}, it follows that 
\begin{align}
\hat{x}_{k+\ell+1} &= A \hat{x}_{k+\ell} + B u_{k+\ell} \notag \\
                       &= \sum^{p_{k+\ell}} _{s=1} \lambda_{s,k+\ell} ( A x^* _{s,\ell} + B u^* _{s,\ell} ) \in  {\cal X}^* _{k+\ell+1} \ominus {\cal W}, \label{pred_cond}
\end{align}
where $u_{k+\ell}$ and $\lambda_{s,k+\ell}$, $s \in \mathbb{N}_{1:p}$ are computed as in \req{com_input}. 
The last inclusion follows from the fact that $Ax^* _{s,\ell} + B u^* _{s, \ell} \in {\cal X} _{\ell+1} \ominus {\cal W}$, $\forall s \in \mathbb{N}_{1:p}$ from the constraint in \req{constraint}. Case (ii) implies that the tube ${\cal X}^* _{k+\ell}$ is the last element of $L_{x} ({\cal R}^* _i, {\cal R}^* _{i+1})$ (i.e., ${\cal X}^* _{k+\ell}\subseteq {\cal R}^* _{i+1}$) and ${\cal X}^* _{k+\ell+1}$ is the first element of $L_{x} ({\cal R}^* _{i+1}, {\cal R}^* _{i+2})$. 
Since $\hat{x}^* _{k+\ell} \in {\cal X}^* _{k+\ell}\subseteq {\cal R}^* _{i+1}$ and $L_{x} ({\cal R}^* _{i+1}, {\cal R}^* _{i+2})$ is a set of tubes obtained by solving \rpro{opt_problem}, we obtain \req{pred_cond}. 
Therefore, regardless of the case (i), (ii), we obtain 
$\hat{x}_{k+\ell} \in {\cal X}^* _{k+\ell} \implies \hat{x}_{k+\ell+1} \in {\cal X}^* _{k+\ell+1} \ominus {\cal W}$. By induction, this follows that $x_k (=\hat{x}_k) \in {\cal X}^* _{k} \implies \hat{x}_{k+\ell} \in {\cal X}^* _{k+\ell} \ominus {\cal W}$, $\forall \ell \in \mathbb{N}_{1:H}$, which completes te proof. 

\end{document}